\documentclass[12pt]{article}

\usepackage[margin=1in]{geometry}
\usepackage{amsmath,amssymb,amsthm,mathrsfs}
\usepackage{mathtools}
\usepackage{booktabs}
\usepackage{hyperref,float}
\usepackage{tikz}
\usetikzlibrary{arrows.meta,positioning}
\usepackage{enumitem}
\everymath{\displaystyle}
\hypersetup{
    colorlinks=true,
    linkcolor=blue,
    citecolor=blue,
    urlcolor=blue
}

\newcommand{\DL}{\mathcal{D}^{L}}
\newcommand{\comp}[1]{\overline{#1}}
\newcommand{\ceil}[1]{\left\lceil #1 \right\rceil}
\newcommand{\floor}[1]{\left\lfloor #1 \right\rfloor}
\newcommand{\Tr}{\operatorname{Tr}}
\newcommand{\diam}{\operatorname{diam}}

\theoremstyle{plain}
\newtheorem{theorem}{Theorem}[section]
\newtheorem{lemma}[theorem]{Lemma}

\newtheorem{corollary}[theorem]{Corollary}

\theoremstyle{definition}

\newtheorem{remark}[theorem]{Remark}

\renewenvironment{proof}{{\noindent\bfseries Proof. }}{\hfill$\square$\par}

\title{Extremal chromatic bounds for distance Laplacian eigenvalues}
\author{
    {\small Bilal Ahmad Rather}\\[2mm]
    {\small School of Mathematics and Statistics, Shandong University of Technology, Zibo 255049, China}\\
    \texttt{bilahamadrr@gmail.com}
}
\date{}
\begin{document}
\maketitle

\begin{abstract}
For a connected simple graph $G$ on $n$ vertices with chromatic number $\chi$, the distance Laplacian matrix is
$\DL(G)=\operatorname{diag}(\Tr_G(v_1),\dots,\Tr_G(v_n))-D(G)$, where $D(G)$ is the distance matrix and
$\Tr_G(v)=\sum_{u\in V(G)} d_G(u,v)$ is the transmission. The eigenvalues of $\DL(G)$ are ordered as
$\partial^{L}_1(G)\ge \partial^{L}_2(G)\ge \cdots \ge \partial^{L}_n(G)=0$.
Building on the chromatic lower bound $\partial^{L}_1(G)\ge n+\ceil{n/\chi}$ and subsequent developments, we prove a
\emph{color-class majorization principle}: if $(\ell_1,\dots,\ell_\chi)$ are the color-class sizes in an optimal
$\chi$-coloring with $\ell_1\ge\cdots\ge\ell_\chi$, then the first $\ell_1-1$ distance Laplacian eigenvalues satisfy
$\partial^{L}_i(G)\ge n+\ell_1$, for $1\le i\le \ell_1-1$. This gives sharp lower bounds on the number of eigenvalues
above the chromatic threshold $b_\chi=n+\ceil{n/\chi}$, thereby refining distribution theorems of [Aouchiche and Hansen, Filomat, 2017] and [Pirzada and Khan LAA, 2021]. We further refine clique/independent-set based multiplicity results by
deriving explicit chromatic criteria in terms of neighborhood compression, and we generalize the extremal problem for minimum
$\partial^{L}_1$ at fixed chromatic number by characterizing the balanced complete multipartite minimizers. Finally, we present a  Ky Fan type result, and complement-component consequences of the majorization principle. 
\end{abstract}

\medskip
\noindent\textbf{Keywords:} distance matrix; distance Laplacian; chromatic number; complete multipartite graphs; eigenvalue distribution; extremal spectral radius.

\smallskip
\noindent\textbf{2020 AMS Subject Classification:} 05C50; 05C12; 15A18.

\section{Introduction}

A \emph{graph} $G=(V(G),E(G))$ is a finite, simple, undirected, connected structure, where
$V(G)=\{v_1,\dots,v_n\}$ is the vertex set and
$E(G)\subseteq \{\{v_i,v_j\}:1\le i<j\le n\}$ is the edge set. The \emph{order} and \emph{size} of $G$ are
$n=|V(G)|$ and $m=|E(G)|$, respectively. For $u,v\in V(G)$, the \emph{distance} $d_G(u,v)$ is the length of a shortest
$u$--$v$ path, and
\[
    \diam(G)=\max\{d_G(u,v):u,v\in V(G)\}.
\]
For a vertex $v\in V(G)$, its open neighborhood is
\[
    N_G(v)=\{u\in V(G):\{u,v\}\in E(G)\}.
\]
A \emph{proper vertex coloring} of $G$ is a partition of $V(G)$ into independent sets, called color classes. The least number
of such classes is the \emph{chromatic number} $\chi(G)$, see \cite{BrouwerHaemers2012,CvetkovicDoobSachs1995}. A cross-edge means an edge whose endpoints lie in two different color classes of a fixed proper coloring.

Distance-based matrices form an important interface between combinatorial structure and linear algebra. The adjacency matrix
$A(G)$ and the Laplacian matrix $L(G)=\Delta(G)-A(G)$, where $\Delta(G)$ is the diagonal degree matrix, encode local adjacency
information and lie at the centre of spectral graph theory \cite{BrouwerHaemers2012,CvetkovicDoobSachs1995}. Among their
most useful properties are that $L(G)$ is symmetric positive semidefinite, $L(G)\mathbf{1}=\mathbf{0}$, and the multiplicity
of the eigenvalue $0$ is the number of connected components of $G$. Hence, for a connected graph, this multiplicity is $1$.
These features make Laplacian spectra natural tools for measuring global connectivity and expansion, and they are often the
right setting for interlacing and extremal arguments \cite{BrouwerHaemers2012,CvetkovicDoobSachs1995}.

While $A(G)$ and $L(G)$ are based on edges, many applications depend primarily on distances: communication cost, chemical
structure descriptors, transportation, routing, and network latency all have this feature. This motivates the \emph{distance
matrix} $D(G)=(d_G(v_i,v_j))_{i,j=1}^n$ and its Laplacian-type companions. For a vertex $v$, the \emph{transmission} is
\[
    \Tr_G(v)=\sum_{u\in V(G)} d_G(u,v),
\]
and the diagonal transmission matrix is
$\operatorname{diag}(\Tr_G(v_1),\dots,\Tr_G(v_n))$. The \emph{distance Laplacian matrix} is
\[
    \DL(G)=\operatorname{diag}(\Tr_G(v_1),\dots,\Tr_G(v_n))-D(G).
\]
This matrix was introduced and systematically studied by Aouchiche and Hansen \cite{AouchicheHansen2013}. Much like the
classical Laplacian, $\DL(G)$ is symmetric positive semidefinite and satisfies $\DL(G)\mathbf{1}=\mathbf{0}$. Hence its
eigenvalues may be ordered as
\[
    \partial^{L}_1(G)\ge \partial^{L}_2(G)\ge \cdots \ge \partial^{L}_n(G)=0.
\]
Distance Laplacian spectra have been investigated in connection with transmission regularity, forbidden substructures,
multiplicities, and extremal problems
\cite{AouchicheHansen2013,AouchicheHansen2014,LinWuChenShu2016,NathPaul2014,FernandesEtAl2018,NiuFanWang2015,bilalMV}.
Recent surveys can be found in \cite{BilalsurveyDL,BilalsurveyDSL}.

A particularly fruitful direction concerns the interaction between distance Laplacian eigenvalues and graph invariants that
constrain global structure, such as the chromatic number. The chromatic number controls the extent to which a graph can be
decomposed into independent sets, and hence regulates the multipartite structure that a graph can admit. On the spectral side,
the largest distance Laplacian eigenvalue $\partial^{L}_1(G)$ plays the role of a distance-based spectral radius and is sensitive
to long-range connectivity patterns through $D(G)$ and the transmissions. Relating $\partial^{L}_1(G)$ and the remaining
$\partial^{L}_i(G)$ to $\chi(G)$ therefore provides a useful bridge between coloring structure and distance-driven spectral
quantities.

In this direction, Aouchiche and Hansen \cite{AouchicheHansen2017} established the sharp chromatic lower bound, for graphs
other than the complete graph,
\begin{equation}\label{eq:AH-bound}
    \partial^{L}_1(G) \ge n+\ceil{\frac{n}{\chi(G)}}\qquad (G\ncong K_n),
\end{equation}
and initiated a systematic study of how many distance Laplacian eigenvalues lie below or above the corresponding chromatic
threshold
\[
    b_\chi=n+\ceil{\frac{n}{\chi(G)}}.
\]
Their work \cite{AouchicheHansen2017} developed distribution phenomena around $b_\chi$ and exhibited structural conditions,
for example diameter constraints, that force additional eigenvalues to cross this threshold. Later, Pirzada and Khan
\cite{PirzadaKhan2021} strengthened these ideas by proving that, when $\chi(G)\le n-2$, the second largest eigenvalue also
obeys the chromatic bound $\partial^{L}_2(G)\ge b_\chi$, and by deriving further distribution results together with an extremal
characterization for graphs with large chromatic number. These results complement earlier investigations on distance Laplacian
spectra and extremal behavior
\cite{AouchicheHansen2013,AouchicheHansen2014,LinWuChenShu2016,NathPaul2014,FernandesEtAl2018,NiuFanWang2015,bilalacta,bilalijpam}
and follow the broader principle that spectral constraints can quantify global combinatorial restrictions
\cite{BrouwerHaemers2012,CvetkovicDoobSachs1995}.

The motivation of the present study is twofold. First, inequalities such as \eqref{eq:AH-bound} naturally lead to more precise
questions: how many eigenvalues must exceed $b_\chi$; how do these counts depend on diameter, complement connectivity, and twin
classes; and which graphs minimize $\partial^{L}_1(G)$ under chromatic constraints? Second, from an applied viewpoint, the
spectrum of $\DL(G)$ encodes distance and transmission information. Lower bounds and distribution results in terms of
$\chi(G)$ therefore provide robust and easily computed certificates for network complexity, routing difficulty, and heterogeneity,
especially when $\chi(G)$ or good bounds on it are available.

Accordingly, we develop refined lower bounds and distribution theorems that strengthen and generalize the main results of
\cite{AouchicheHansen2017,PirzadaKhan2021}, while also connecting to multiplicity mechanisms and extremal principles studied in
\cite{AouchicheHansen2013,AouchicheHansen2014,LinWuChenShu2016,NathPaul2014,FernandesEtAl2018,NiuFanWang2015}. Our approach
combines edge-monotonicity with explicit spectra of complete multipartite graphs and neighborhood-compression methods. The main
contribution is an eigenvalue multiplicity mechanism that comes directly from an optimal coloring: a large color class forces not
only one large distance Laplacian eigenvalue, but a whole block of them.

The paper is organized as follows. Section \ref{section 2} records basic and existing results. Section \ref{section 3} proves the
color-class majorization principle and its first consequences. Section \ref{section 4} gives structural results in terms of twin
cliques, twin independent sets, and distance Laplacian eigenvalues. Section \ref{sec:extremal} treats the extremal distance
Laplacian spectral radius at fixed chromatic number.
Section \ref{section-new} gives additional consequences of the main principle, and Section \ref{conclusion} contains concluding
remarks.

\section{Existing results}\label{section 2}

We first recall the edge-deletion monotonicity of the distance Laplacian spectrum.

\begin{theorem}[Aouchiche--Hansen \cite{AouchicheHansen2013}]\label{thm:monotone}
Let $G$ be connected with $m\ge n$ edges, and let $G^\ast$ be obtained from $G$ by deleting an edge in such a way that
$G^\ast$ remains connected. Then
\[
    \partial^{L}_i(G^\ast)\ge \partial^{L}_i(G)\qquad (1\le i\le n).
\]
\end{theorem}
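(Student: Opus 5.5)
The approach is to prove Theorem~\ref{thm:monotone} directly from the structure of $\DL$. Deleting an edge $e=uv$ never decreases any distance, so every transmission and every distance weakly increases. The key idea is to write $\DL(G)$ as a weighted combination of elementary Laplacians,
\[
    \DL(G)=\sum_{\{x,y\}\subseteq V(G)} d_G(x,y)\,L_{xy},
\]
where $L_{xy}=(\mathbf{e}_x-\mathbf{e}_y)(\mathbf{e}_x-\mathbf{e}_y)^{T}$ is the Laplacian of the single edge $xy$. This identity is immediate from the definitions. The $(x,x)$ entry collects $\sum_y d_G(x,y)=\Tr_G(x)$, and the $(x,y)$ entry is $-d_G(x,y)$. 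Hence $\DL(G)$ is the ordinary Laplacian of the complete graph $K_n$ with edge weights $d_G(x,y)$.

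With this representation the monotonicity follows in three steps.

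\begin{enumerate}[label=(\roman*)]
    \item Since $G^\ast$ is a connected spanning subgraph of $G$, every path in $G^\ast$ is a path in $G$. Therefore
    \[
        d_{G^\ast}(x,y)\ge d_G(x,y)\qquad\text{for all } x,y.
    \]
    Connectivity of $G^\ast$ guarantees that these distances are finite, so $\DL(G^\ast)$ is defined.
    \item Subtracting the two representations gives
    \[
        \DL(G^\ast)-\DL(G)=\sum_{\{x,y\}}\bigl(d_{G^\ast}(x,y)-d_G(x,y)\bigr)L_{xy},
    \]
    a nonnegative combination of positive semidefinite matrices. Hence $\DL(G^\ast)-\DL(G)$ is positive semidefinite.
    \item Apply Weyl's inequality, equivalently the Courant--Fischer min--max characterization. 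If $A$ and $B$ are symmetric and $B-A\succeq 0$, then $\lambda_i(B)\ge\lambda_i(A)$ for every $i$. Indeed, for any subspace $U$ we have $x^{T}Bx\ge x^{T}Ax$ on $U$, so the max--min values over $i$-dimensional subspaces compare in the same direction. With eigenvalues ordered decreasingly, this yields $\partial^{L}_i(G^\ast)\ge\partial^{L}_i(G)$ for $1\le i\le n$.
\end{enumerate}

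The hypothesis $m\ge n$ is not needed for the inequality itself. It only guarantees that $G$ contains a cycle, so that some edge can be removed while keeping the graph connected.

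The only step needing any care is verifying the decomposition of $\DL$ into weighted elementary Laplacians. This is a one-line entrywise check, so I expect no genuine obstacle.

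The argument also shows that the inequality is strict for some $i$ whenever some distance strictly increases. Taking the trace,
\[
    \sum_i \partial^{L}_i(G^\ast)=2W(G^\ast)>2W(G)=\sum_i \partial^{L}_i(G),
\]
where $W$ denotes the Wiener index. The increase is strict because deleting $uv$ makes $d_{G^\ast}(u,v)\ge 2>1=d_G(u,v)$. So at least one $\partial^{L}_i$ strictly increases, although the theorem as stated requires only the weak inequalities.
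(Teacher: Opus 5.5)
The paper does not prove this theorem. It quotes it from Aouchiche and Hansen and uses it as a black box, so there is no in-paper argument to compare against.

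Your proof is correct and complete, and it is the standard route. The decomposition $\DL(G)=\sum d_G(x,y)L_{xy}$ is the matrix form of the paper's quadratic-form identity (Lemma~\ref{lem:quadratic-form}). Distances can only grow in a connected spanning subgraph, so $\DL(G^\ast)-\DL(G)$ is a weighted Laplacian with nonnegative weights. Courant--Fischer then turns this Loewner inequality into $\partial^{L}_i(G^\ast)\ge\partial^{L}_i(G)$ for every $i$.

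Your version gives slightly more than the quoted statement:
\begin{itemize}
\item Your observation that $m\ge n$ is redundant is right. If both $G$ and $G-e$ are connected, then $e$ lies on a cycle, so $m\ge n$ holds automatically.
\item The argument never uses that exactly one edge is removed. For any connected spanning subgraph $G$ of a graph $H$ it gives $\DL(G)-\DL(H)\succeq 0$ in one step. This would let the proof of Theorem~\ref{thm:color-majorization} skip the chain $G_0\subseteq G_1\subseteq\cdots\subseteq G_t$, and it directly justifies the ``edge-addition form'' invoked in Lemma~\ref{lem:equality-AH}.
\end{itemize}

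Your trace remark is also correct. Note, though, that it only gives a strict increase for some index, not for $\partial^{L}_1$ in particular. So it does not by itself settle strictness questions about the spectral radius.
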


\noindent Equivalently, adding an edge to a connected graph cannot increase any distance Laplacian eigenvalue. Indeed, if $H=G+e$, then
$G$ is obtained from $H$ by deleting $e$ and remaining connected, so Theorem \ref{thm:monotone} gives
$\partial_i^L(G)\ge \partial_i^L(H)$ for every $i$.

\medskip
\noindent The next result gives the distance Laplacian spectrum of complete multipartite graphs.

\begin{lemma}[\cite{BilalsurveyDL}]\label{lem:multipartite-spectrum}
Let $K_{\ell_1,\dots,\ell_k}$ be a complete $k$-partite graph on $n=\sum_{j=1}^k \ell_j$ vertices, with
$\ell_1\ge\ell_2\ge\cdots\ge\ell_k\ge 1$, and let $p=|\{j:\ell_j\ge 2\}|.$ 
Then the distance Laplacian spectrum is
\[
    \Big( (n+\ell_1)^{(\ell_1-1)},(n+\ell_2)^{(\ell_2-1)},\dots,(n+\ell_p)^{(\ell_p-1)},n^{(k-1)},0\Big),
\]
where exponents denote multiplicities.
\end{lemma}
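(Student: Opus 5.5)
We compute the distance Laplacian spectrum of $K_{\ell_1,\dots,\ell_k}$ directly, using the fact that its distance matrix has a very simple block form. Write $V=V_1\cup\cdots\cup V_k$ with $|V_j|=\ell_j$. Vertices in different parts are adjacent, and two distinct vertices in the same part $V_j$ (when $\ell_j\ge 2$) have a common neighbour in any other part, so they are at distance $2$. Here we assume $k\ge 2$; the case $k=1$ is excluded by connectivity unless $n=1$. Hence for $v\in V_j$
\[
\Tr(v)=(n-\ell_j)+2(\ell_j-1)=n+\ell_j-2,
\]
and $D=J-I+\bigoplus_j (J_{\ell_j}-I_{\ell_j})$, with $J$ the all-ones matrix. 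Therefore
\[
\DL=\bigoplus_j\big((n+\ell_j-2)I_{\ell_j}\big)-J+I-\bigoplus_j(J_{\ell_j}-I_{\ell_j})
=\bigoplus_j\big((n+\ell_j)I_{\ell_j}-J_{\ell_j}\big)-J .
\]

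The plan is then to exhibit an orthogonal eigenbasis in two families.

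For the first family, fix a part $V_j$ with $\ell_j\ge 2$ and take any vector $x$ supported on $V_j$ with entries summing to $0$. Then $Jx=0$ and $J_{\ell_j}x=0$, so $\DL x=(n+\ell_j)x$. This yields an $(\ell_j-1)$-dimensional eigenspace for each such $j$, giving the eigenvalues $(n+\ell_j)^{(\ell_j-1)}$ for $j\le p$. Parts with $\ell_j=1$ contribute nothing here.

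For the second family, take vectors constant on each part, say $x=\sum_j c_j\mathbf 1_{V_j}$. Then $\DL x$ equals
\[
\sum_j\big((n+\ell_j)c_j-\ell_jc_j\big)\mathbf 1_{V_j}-\Big(\sum_i \ell_ic_i\Big)\mathbf 1
=n x-\Big(\sum_i\ell_ic_i\Big)\mathbf 1 .
\]
So if $\sum_i\ell_ic_i=0$ (equivalently $x\perp\mathbf 1$), then $x$ is an eigenvector with eigenvalue $n$. This is a $(k-1)$-dimensional space, giving $n^{(k-1)}$. Finally, $\mathbf 1$ itself gives the eigenvalue $0$.

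It remains to check that the count is complete. The eigenvectors found total
\[
\sum_{j\le p}(\ell_j-1)+(k-1)+1=\sum_{j=1}^k(\ell_j-1)+k=n,
\]
since the parts with $\ell_j=1$ contribute $0$ to the first sum. The two families are mutually orthogonal: zero-sum vectors on a part are orthogonal to vectors that are constant on that part. So they span $\mathbb R^n$, and the spectrum is exactly as stated.

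The only step requiring care is the distance computation, namely that same-part vertices are at distance exactly $2$. This needs $k\ge 2$, and it is the reason the multiplicity exponent involves $\ell_j-1$ and the index $p$. Everything else is routine linear algebra.
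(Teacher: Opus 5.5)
Your proof is correct and complete. The distance, transmission and block-matrix computations check out, both eigenvector families are verified, and the orthogonal-decomposition count shows the list is exhaustive.

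The paper itself gives no proof of this lemma; it imports the result from \cite{BilalsurveyDL}. Your argument therefore supplies a self-contained derivation that the paper lacks, so the comparison is with the standard literature route rather than with an argument in the paper.

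Your block formula is exactly the identity $\DL(G)=nI-J+L(\comp{G})$. This holds because $\comp{K_{\ell_1,\dots,\ell_k}}$ is the disjoint union of the cliques $K_{\ell_j}$, and $L(K_{\ell_j})=\ell_jI_{\ell_j}-J_{\ell_j}$. That identity is valid for every connected graph of diameter at most $2$. The usual derivation (going back to Aouchiche--Hansen) uses it to read off the distance Laplacian eigenvalues on $\mathbf 1^{\perp}$ as $n$ plus the Laplacian eigenvalues of $\comp{G}$ with one zero removed. Your explicit eigenvectors reach the same conclusion without invoking the general identity or the Laplacian spectrum of cliques; the general identity's advantage is that it covers every diameter-$2$ graph.

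One could also stay inside the paper's own toolkit. Theorem~\ref{thm:indep-twins} applied to each part gives the eigenvalue $\Tr(v)+2=n+\ell_j$. Theorem~\ref{thm:n-mult-complement} gives $n$ with multiplicity exactly $k-1$, since $\comp{G}$ has $k$ components. A dimension count then finishes. However, that route only yields ``at least $\ell_j-1$'' per part. When several parts have the same size, one must additionally argue that the twin eigenvectors from different parts are independent, which requires their disjoint supports. Your orthogonal decomposition handles coincident part sizes and completeness automatically.
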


\noindent Theorem \ref{thm:monotone} shows that distance Laplacian eigenvalues can only move upward under edge deletions. Thus, when a graph is completed by adding edges, the distance Laplacian spectrum moves downward. Lemma \ref{lem:multipartite-spectrum}
provides the sharp comparison object for the chromatic refinements in our study.

\medskip
\noindent  We shall also use the following quadratic form identity. It is standard but useful enough to state explicitly.
\begin{lemma}\label{lem:quadratic-form}
For every connected graph $G$ and every vector $x\in\mathbb{R}^{V(G)}$,
\[
    x^{T}\DL(G)x=\sum_{\{u,v\}\subseteq V(G)} d_G(u,v)(x_u-x_v)^2.
\]
In particular, $\DL(G)$ is positive semidefinite and $\DL(G)\mathbf{1}=\mathbf{0}$.
\end{lemma}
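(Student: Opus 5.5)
Since both sides of the identity are quadratic polynomials in $x$, I will expand the left-hand side directly from the definition $\DL(G)=\operatorname{diag}(\Tr_G(v_1),\dots,\Tr_G(v_n))-D(G)$ and regroup the terms by unordered pairs. Write $d_{uv}=d_G(u,v)$, and recall that $d_{uv}=d_{vu}$ and $d_{vv}=0$. The diagonal part contributes
\[
\sum_{v}\Tr_G(v)\,x_v^2=\sum_{v}\sum_{u}d_{uv}\,x_v^2 .
\]
Splitting the inner sum over ordered pairs $(u,v)$ with $u\neq v$ into the two orientations of each unordered pair, this becomes $\sum_{\{u,v\}}d_{uv}(x_u^2+x_v^2)$.

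The off-diagonal part contributes $-\sum_{u,v}d_{uv}x_ux_v$. The diagonal terms of this sum vanish because $d_{vv}=0$, and by symmetry of $D(G)$ the remaining terms combine to $-2\sum_{\{u,v\}}d_{uv}x_ux_v$. Adding the two contributions gives
\[
x^T\DL(G)x=\sum_{\{u,v\}}d_{uv}\,(x_u^2-2x_ux_v+x_v^2)=\sum_{\{u,v\}}d_{uv}(x_u-x_v)^2,
\]
which is the claimed identity.

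For the consequences, connectivity of $G$ guarantees that every $d_{uv}$ is a finite positive number, so each summand is nonnegative. Hence $x^T\DL(G)x\ge 0$ for all $x$, and $\DL(G)$, being symmetric, is positive semidefinite. For the vector $\mathbf 1$, the $v$-th row sum of $\DL(G)$ is $\Tr_G(v)-\sum_u d_{uv}=0$, so $\DL(G)\mathbf 1=\mathbf 0$.

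There is no real obstacle here. The only point requiring care is the factor of $2$ when passing between sums over ordered pairs and sums over unordered pairs.
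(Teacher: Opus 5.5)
Your proof is correct. You expand the diagonal term over both orientations of each unordered pair, and you use $d_G(v,v)=0$ and the symmetry of $D(G)$ for the cross term, which gives exactly $\sum_{\{u,v\}} d_G(u,v)(x_u-x_v)^2$; the zero row sums then give $\DL(G)\mathbf{1}=\mathbf{0}$. The paper states this lemma without proof, calling it standard, and your direct expansion is that standard argument.
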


\medskip
\noindent  We also recall two local-structure principles related to cliques and independent sets with identical external neighborhoods.

\begin{theorem}[Clique twins \cite{AouchicheHansen2013}]\label{thm:clique-twins}
Let $G$ be a graph on $n$ vertices, and let $H=\{v_1,\dots,v_s\}$ be a clique such that
\[
    N_G(v_i)\setminus H=N_G(v_j)\setminus H\qquad\text{for all }i,j.
\]
Then $\Tr_G(v_1)=\cdots=\Tr_G(v_s)$, and $\Tr_G(v_1)+1$ is an eigenvalue of $\DL(G)$ with multiplicity at least $s-1$.
\end{theorem}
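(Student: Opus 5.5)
We prove Theorem~\ref{thm:clique-twins} directly, by exhibiting $s-1$ linearly independent eigenvectors of $\DL(G)$ supported on $H$. The idea is that the vertices of $H$ are mutually adjacent and have the same external neighbourhood, so they are metrically indistinguishable from outside $H$.

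\emph{Step 1: distances from outside $H$.} Fix $w\notin H$ and $i\neq j$. We claim $d_G(v_i,w)=d_G(v_j,w)$. Take a shortest $v_i$--$w$ path $v_i=x_0,x_1,\dots,x_k=w$. We may assume $x_1\notin H$: if $x_1\in H$, the path could be shortened using $v_i$'s adjacency to the rest of $H$, or we simply start from the last vertex of the path lying in $H$. Then $x_1\in N_G(v_i)\setminus H=N_G(v_j)\setminus H$, so $v_j,x_1,\dots,x_k$ is a walk of length $k$ from $v_j$ to $w$. Hence $d_G(v_j,w)\le d_G(v_i,w)$, and equality follows by symmetry. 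A cleaner way to say this: any shortest path from a vertex of $H$ to $w$ leaves $H$ through some vertex of the common set $N=N_G(v_i)\setminus H$. Therefore $d_G(v_i,w)=1+\min_{y\in N} d_{G-H}'(y,w)$ for all $i$, where the case $w\in N$ is included. Inside $H$ all distances equal $1$.

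\emph{Step 2: equal transmissions.} For each $i$,
\[
\Tr_G(v_i)=(s-1)+\sum_{w\notin H} d_G(v_i,w),
\]
and by Step 1 this is independent of $i$. Call the common value $T$.

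\emph{Step 3: eigenvectors.} For $2\le j\le s$ let $x^{(j)}=e_{v_1}-e_{v_j}$. We compute $\DL(G)x^{(j)}$ coordinatewise.
\begin{itemize}
\item At a vertex $w\notin H$ the entry is $-d(w,v_1)+d(w,v_j)=0$ by Step 1.
\item At $v_1$ the entry is $\Tr_G(v_1)-(-d(v_1,v_j))=T+1$.
\item At $v_j$ the entry is $-d(v_j,v_1)-T=-(T+1)$.
\item At any other $v_k\in H$ the entry is $-1+1=0$.
\end{itemize}
Hence $\DL(G)x^{(j)}=(T+1)x^{(j)}$. The $s-1$ vectors $x^{(j)}$ are clearly linearly independent, so $T+1$ has multiplicity at least $s-1$.

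\emph{Main obstacle.} The only delicate point is Step 1: we must justify that a shortest path from $v_i$ to an outside vertex can be taken to exit $H$ immediately. This holds because any path that moves to another vertex $v_k\in H$ first can be replaced by starting at $v_k$, and every vertex of $H$ has the same outside neighbours. So the first vertex of the path outside $H$ lies in the common external neighbourhood, and its distance from every $v_i$ is $1$.
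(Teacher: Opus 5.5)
Your proof is correct. The paper gives no proof of its own here (the statement is quoted from Aouchiche and Hansen \cite{AouchicheHansen2013}), and your route is the standard argument for that cited result: each $v_i$ is at the same distance from every $w\notin H$, so the transmissions coincide, and then $e_{v_1}-e_{v_j}$ for $2\le j\le s$ are independent eigenvectors for $\Tr_G(v_1)+1$.

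One phrase in Step~1 should be tightened. ``Shortened using $v_i$'s adjacency to the rest of $H$'' does not by itself handle the case $x_1\in H$, $x_2\notin H$; there you need the twin hypothesis. Your closing formulation settles it cleanly: the first vertex $y$ of the path outside $H$ lies in the common external neighbourhood, which yields $d_G(v_j,w)\le 1+d_G(y,w)\le d_G(v_i,w)$. Also, since $\DL(G)$ is symmetric, your $s-1$ independent eigenvectors bound the algebraic multiplicity as well.
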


\begin{theorem}[Independent twins \cite{AouchicheHansen2013}]\label{thm:indep-twins}
Let $G$ be a graph on $n$ vertices, and let $M=\{v_1,\dots,v_s\}$ be an independent set such that
\[
    N_G(v_i)=N_G(v_j)\qquad\text{for all }i,j.
\]
Then $\Tr_G(v_1)=\cdots=\Tr_G(v_s)$, and $\Tr_G(v_1)+2$ is an eigenvalue of $\DL(G)$ with multiplicity at least $s-1$.
\end{theorem}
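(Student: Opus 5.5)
We verify the claim directly from the definition of $\DL(G)$, exhibiting $s-1$ linearly independent eigenvectors supported on $M$. As throughout the paper, $G$ is connected. We may assume $s\ge 2$, since for $s=1$ the multiplicity statement is vacuous and the transmission claim is trivial. Connectedness with $s\ge2$ then forces the common neighborhood $N:=N_G(v_1)=\cdots=N_G(v_s)$ to be nonempty: otherwise the $v_i$ would be isolated vertices.

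\textbf{Step 1: distances from twins agree.} Fix $i\ne j$.
\begin{itemize}
\item \emph{Distance between the twins.} Since $v_i,v_j$ are nonadjacent (as $M$ is independent) and share a neighbor in $N$, we get $d_G(v_i,v_j)=2$.
\item \emph{Distance to a neighbor.} If $w\in N$, then $d_G(v_i,w)=d_G(v_j,w)=1$.
\item \emph{Distance to any other vertex.} Let $w\notin N\cup\{v_i,v_j\}$. Any shortest $v_i$--$w$ path leaves $v_i$ through some $u\in N$, so $d_G(v_i,w)=1+\min_{u\in N} d_{G-v_i}(u,w)$. We must check that this minimum is not affected by whether $v_i$ or $v_j$ is deleted. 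A shortest path from $u$ to $w$ never needs to pass through another twin $v_k$: entering and leaving $v_k$ uses two vertices of $N$, which could instead be reached directly from the starting side via $v_i$ itself. The cleanest way to say this is symmetric: the map swapping $v_i$ and $v_j$ and fixing every other vertex is a graph automorphism, because the two vertices have identical neighborhoods and are nonadjacent.
\end{itemize}
Hence $d_G(v_i,w)=d_G(v_j,w)$ for every $w\ne v_i,v_j$.

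Summing over all $w$ then gives
\[
\Tr_G(v_i)=2+\sum_{w\ne v_i,v_j} d_G(v_i,w)=\Tr_G(v_j).
\]
Denote this common value by $T$.

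\textbf{Step 2: eigenvectors.} For $2\le k\le s$, let $x^{(k)}=e_{v_1}-e_{v_k}$. We compute $\DL(G)x^{(k)}$ coordinatewise.
\begin{itemize}
\item At $v_1$ the entry is $T-(-d_G(v_1,v_k))=T+2$.
\item At $v_k$ the entry is $-d_G(v_k,v_1)-T=-(T+2)$.
\item At any other vertex $w$ the entry is $-d_G(w,v_1)+d_G(w,v_k)=0$, by Step 1.
\end{itemize}
Thus $\DL(G)x^{(k)}=(T+2)x^{(k)}$.

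The vectors $x^{(2)},\dots,x^{(s)}$ are linearly independent, since $x^{(k)}$ is the only one with a nonzero entry at $v_k$. Therefore the eigenspace of $T+2$ has dimension at least $s-1$.

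\textbf{Main obstacle.} The only nontrivial point is Step 1, namely that distances from $v_i$ and $v_j$ to third vertices coincide. I would handle it through the automorphism observation rather than path surgery, since the automorphism argument avoids case analysis entirely.
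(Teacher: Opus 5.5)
The paper does not prove this statement; it only quotes it from Aouchiche--Hansen. Your argument is correct and is essentially the standard proof of that result: the twin-swapping automorphism gives $d_G(v_i,w)=d_G(v_j,w)$, and then the vectors $e_{v_1}-e_{v_k}$ are eigenvectors for $\Tr_G(v_1)+2$. The path-surgery remark in Step 1 is superfluous, since the automorphism alone does the job, and because $\DL(G)$ is symmetric your bound on the eigenspace dimension is also the algebraic multiplicity bound the paper uses.
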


\section{A color-class majorization principle}\label{section 3}

Throughout, $\mu_G(I)$ denotes the number of eigenvalues of $\DL(G)$ that lie in the real interval $I$, counted with algebraic
multiplicity. Since the eigenvalues are ordered as
$\partial^{L}_1(G)\ge\cdots\ge\partial^{L}_n(G)=0$, we have
\begin{equation}\label{eq:count-identity}
    \mu_G([0,b_\chi)) + \mu_G([b_\chi,\partial^{L}_1(G)])=n.
\end{equation}
 Let $\chi=\chi(G)$, and fix an optimal proper $\chi$-coloring with color classes
$C_1,\dots,C_\chi$. Write
\[
    \ell_1=|C_1|\ge \ell_2=|C_2|\ge\cdots\ge\ell_\chi=|C_\chi|\ge 1,
    \qquad \sum_{j=1}^{\chi}\ell_j=n.
\]
Let $K_{\ell_1,
\dots,
\ell_\chi}$ be the complete $\chi$-partite graph induced by this partition. Since each $C_j$ is an
independent set and every edge of $G$ joins two distinct color classes, the complete multipartite graph contains every possible
cross-edge. Hence $E(G)\subseteq E(K_{\ell_1,\dots,\ell_\chi}),$ so $G$ is a spanning subgraph of $K_{\ell_1,\dots,\ell_\chi}$. The sequence $(\ell_1,
\dots,
\ell_\chi)$ depends on the chosen
optimal coloring, and one such coloring is fixed throughout the section.

\begin{theorem}[Color-class majorization]\label{thm:color-majorization}
Let $G$ be connected, let $\chi=\chi(G)$, and let
$\ell_1\ge\cdots\ge\ell_\chi$ be the color-class sizes of a fixed optimal $\chi$-coloring. Then
 $\partial^{L}_i(G)\ge n+\ell_1$ for $1\le i\le \ell_1-1. $
More generally, let $s_j=\sum_{t=1}^{j}(\ell_t-1)$ for $j\ge1,$ and $s_0=0,$ and let $p=|\{j:\ell_j\ge2\}|$. Then
$\partial^{L}_i(G)\ge n+\ell_j$  whenever $s_{j-1}+1\le i\le s_j,$ for  $1\le j\le p.$ 
\end{theorem}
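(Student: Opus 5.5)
The plan is to compare $G$ with the complete multipartite graph $K=K_{\ell_1,\dots,\ell_\chi}$ on the same color partition. As observed just before the statement, $G$ is a spanning subgraph of $K$. We then transfer the explicit spectrum of Lemma \ref{lem:multipartite-spectrum} to $G$ via the edge-monotonicity of Theorem \ref{thm:monotone}.

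\textbf{Step 1 (connected edge-deletion chain).} Since $G$ is connected and spanning in $K$, list the edges of $E(K)\setminus E(G)$ as $e_1,\dots,e_r$. Set $H_k=G+e_1+\cdots+e_k$, so that $H_0=G$ and $H_r=K$. Each $H_k$ contains the connected spanning graph $G$, so every $H_k$ is connected. Hence $H_{k-1}$ is obtained from $H_k$ by deleting one edge while remaining connected.

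\textbf{Step 2 (monotonicity along the chain).} Applying Theorem \ref{thm:monotone}, in the form of the remark after it (adding an edge to a connected graph does not increase any eigenvalue), gives $\partial^L_i(H_{k-1})\ge\partial^L_i(H_k)$ for all $i$ and all $k$. The hypothesis $m\ge n$ in Theorem \ref{thm:monotone} is a technicality. If $H_k$ were a tree, deleting an edge would disconnect it, so whenever such a deletion is legal $H_k$ is not a tree and therefore has $m\ge n$. Chaining these inequalities yields $\partial^L_i(G)\ge\partial^L_i(K)$ for every $1\le i\le n$.

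\textbf{Step 3 (reading off the spectrum of $K$).} By Lemma \ref{lem:multipartite-spectrum}, the spectrum of $K$ consists of $n+\ell_j$ with multiplicity $\ell_j-1$ for $j\le p$, together with $n^{(\chi-1)}$ and $0$. Since $\ell_1\ge\cdots\ge\ell_p\ge2$, the values $n+\ell_1\ge n+\ell_2\ge\cdots\ge n+\ell_p>n$ are already in nonincreasing order and all exceed the remaining eigenvalues. So in the ordered list the positions $s_{j-1}+1,\dots,s_j$ are occupied exactly by the copies of $n+\ell_j$, which gives $\partial^L_i(K)=n+\ell_j$ for $s_{j-1}+1\le i\le s_j$. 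If there are ties $\ell_j=\ell_{j+1}$, the blocks merge, but the index assignment is unaffected. Combining this with Step 2 gives the general statement, and the case $j=1$ is the first claim $\partial^L_i(G)\ge n+\ell_1$ for $1\le i\le\ell_1-1$.

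The only point needing care is Step 2: verifying that every intermediate graph in the chain is connected and that the hypothesis of Theorem \ref{thm:monotone} holds throughout. Both follow because $G$ itself is connected. The rest is bookkeeping of indices. If $\ell_1=1$ the statement is vacuous, since then $p=0$ and the range $1\le i\le\ell_1-1$ is empty.
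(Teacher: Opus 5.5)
Your proposal is correct and follows the paper's proof essentially step for step: you embed $G$ as a spanning subgraph of $K_{\ell_1,\dots,\ell_\chi}$, add the missing cross-edges one at a time through connected intermediate graphs, chain the inequalities of Theorem~\ref{thm:monotone}, and read off the block positions $s_{j-1}+1,\dots,s_j$ from Lemma~\ref{lem:multipartite-spectrum}. Your explicit check that each intermediate graph satisfies the hypothesis $m\ge n$ is a small point the paper leaves implicit; it holds because a connected graph admitting a connectivity-preserving edge deletion is not a tree.
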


\begin{proof}
	Let $H=K_{\ell_1,\dots,\ell_\chi}$ be the complete $\chi$-partite graph whose parts are the color classes
	$C_1,\dots,C_\chi$ of the fixed optimal coloring of $G$. Since each $C_j$ is an independent set in $G$, no edge of $G$ has both endpoints in the same color class. Hence every edge of $G$ joins two distinct color classes. The graph $H$ contains all possible edges between distinct color classes, and therefore $E(G)\subseteq E(H).$ Thus $G$ is a spanning subgraph of $H$.
	 If $G=H$, then the conclusion follows from Lemma~\ref{lem:multipartite-spectrum}. Suppose now that $G\ne H$. Choose an ordering $e_1,e_2,\dots,e_t$ of the edges in $E(H)\setminus E(G)$, and define $G_0=G,$ $G_r=G+\{e_1,\dots,e_r\}$ for $1\le r\le t.$ Then $G=G_0\subseteq G_1\subseteq\cdots\subseteq G_t=H,$ and $G_{r+1}$ is obtained from $G_r$ by adding the edge $e_{r+1}$. Since $G$ is connected and $G$ is a spanning subgraph of every $G_r$, each $G_r$ is connected. Moreover, for each $r$, the graph $G_r$ is obtained from $G_{r+1}$ by deleting the single edge $e_{r+1}$, and this deletion leaves the graph connected. Therefore, the edge-deletion monotonicity idea of Theorem~\ref{thm:monotone} can be applied to the pair
	 $(G_{r+1},G_r).$ It follows that 
	\[
	\partial_i^L(G_r)\ge \partial_i^L(G_{r+1})
	\qquad
	1\le i\le n,\;0\le r\le t-1.
	\]
	Chaining these inequalities, we have
	\begin{equation}\label{eq:domination-G-H}
		\partial_i^L(G)=\partial_i^L(G_0)
		\ge \partial_i^L(G_1)
		\ge \cdots
		\ge \partial_i^L(G_t)
		=
		\partial_i^L(H)
		\qquad (1\le i\le n).
	\end{equation}
	Thus every distance Laplacian eigenvalue of $G$, in the ordered list, is at least the corresponding eigenvalue of the complete multipartite comparison graph $H$. By Lemma~\ref{lem:multipartite-spectrum}, the explicit spectrum of $H$ is
	\[
	\mathrm{Spec}(\DL(H))
	=
	\Big(
	(n+\ell_1)^{(\ell_1-1)},
	(n+\ell_2)^{(\ell_2-1)},
	\dots,
	(n+\ell_p)^{(\ell_p-1)},
	n^{(\chi-1)},
	0
	\Big),
	\]
	where $p=|\{j:\ell_j\ge2\}|.$ 
	Here a term with $\ell_j=1$ contributes no eigenvalue of the form $n+\ell_j$, since its multiplicity would be $\ell_j-1=0$.
	As  $\ell_1\ge\ell_2\ge\cdots\ge\ell_\chi,$ the numbers $n+\ell_1,\; n+\ell_2,\;\dots,\; n+\ell_p$ are in nonincreasing order. Also, since $\ell_j\ge2$ for $1\le j\le p$, each of these numbers is strictly larger than $n$. Hence the first part of the ordered spectrum of $\DL(H)$ consists exactly of the blocks
	\[
	\Big((n+\ell_1)^{(\ell_1-1)},
	(n+\ell_2)^{(\ell_2-1)},
	\dots,
	(n+\ell_p)^{(\ell_p-1)}\Big).
	\]
	 In particular, the first $\ell_1-1$ eigenvalues of $\DL(H)$ are equal to $n+\ell_1$, that is,
	\[
	\partial_1^L(H)=\partial_2^L(H)=\cdots=
	\partial_{\ell_1-1}^L(H)=n+\ell_1.
	\]
	Combining this with \eqref{eq:domination-G-H}, we obtain
	\[
	\partial_i^L(G)\ge \partial_i^L(H)=n+\ell_1
	\qquad
	(1\le i\le \ell_1-1).
	\]
	This proves the first assertion. For the general block statement, let  $s_j=\sum_{t=1}^{j}(\ell_t-1),$ with  $s_0=0.$ 
	The first block of the ordered spectrum of $\DL(H)$ has length $\ell_1-1$, the second block has length $\ell_2-1$, and so on. Therefore, for every $1\le j\le p$, the eigenvalue $n+\ell_j$ occupies precisely the positions
	 $s_{j-1}+1,\;s_{j-1}+2,\;\dots,\;s_j$ in the ordered spectrum of $\DL(H)$. Equivalently,
	 $\partial_i^L(H)=n+\ell_j$  whenever $s_{j-1}+1\le i\le s_j.$ Applying \eqref{eq:domination-G-H}, we have 
	\[
	\partial_i^L(G)\ge \partial_i^L(H)=n+\ell_j
	\qquad
	\text{whenever }s_{j-1}+1\le i\le s_j,
	\quad 1\le j\le p.
	\]
\end{proof}

\medskip
\noindent Define the chromatic threshold $b_\chi=n+\ceil{\tfrac{n}{\chi}}.$  The following consequence shows that many distance Laplacian eigenvalues exceed this threshold.

\begin{corollary}\label{cor:many-above-bchi}
For every connected graph $G$ with $\chi(G)=\chi\le n-1$,
\[
    \left|\left\{i\in\{1,
\dots,n\}:\partial_i^L(G)\ge b_\chi\right\}\right|
    \ge \ell_1-1\ge \ceil{\frac{n}{\chi}}-1.
\]
Equivalently,
\[
    \mu_G([0,b_\chi))\le n-\left(\ceil{\frac{n}{\chi}}-1\right)
    =n-\ceil{\frac{n}{\chi}}+1.
\]
\end{corollary}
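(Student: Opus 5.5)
This is a direct consequence of Theorem~\ref{thm:color-majorization} combined with pigeonhole on the color classes. The plan has three steps.

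\emph{Step 1: the largest color class is large.} Since $C_1,\dots,C_\chi$ partition $V(G)$ and $\ell_1$ is the largest size, we have $\chi\ell_1\ge\sum_j\ell_j=n$. Hence $\ell_1\ge n/\chi$, and because $\ell_1$ is an integer, $\ell_1\ge\ceil{n/\chi}$. This gives both $\ell_1-1\ge\ceil{n/\chi}-1$ and $n+\ell_1\ge n+\ceil{n/\chi}=b_\chi$.

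\emph{Step 2: count eigenvalues above the threshold.} By the first assertion of Theorem~\ref{thm:color-majorization}, $\partial^L_i(G)\ge n+\ell_1\ge b_\chi$ for every $1\le i\le \ell_1-1$. So the indices $1,\dots,\ell_1-1$ all lie in the set $\{i:\partial^L_i(G)\ge b_\chi\}$. This yields the displayed inequality chain. Here the hypothesis $\chi\le n-1$ guarantees $\ell_1\ge2$, so the statement is not vacuous, although the inequality holds trivially even when $\ell_1=1$.

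\emph{Step 3: the equivalent form.} Every eigenvalue $\partial^L_i(G)$ with $\partial^L_i(G)\ge b_\chi$ lies in $[b_\chi,\partial^L_1(G)]$. The count identity \eqref{eq:count-identity} then gives
\[
\mu_G([0,b_\chi))=n-\mu_G([b_\chi,\partial^L_1(G)])\le n-(\ell_1-1)\le n-\ceil{\frac{n}{\chi}}+1.
\]

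There is no real obstacle. The only points needing care are the ceiling step $\ell_1\ge\ceil{n/\chi}$ from integrality, and checking that eigenvalues are counted with multiplicity consistently between the index count and $\mu_G$.
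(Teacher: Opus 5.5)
Your proposal is correct and follows essentially the same route as the paper. Pigeonhole gives $\ell_1\ge\ceil{n/\chi}$, Theorem~\ref{thm:color-majorization} places $\partial^L_1(G),\dots,\partial^L_{\ell_1-1}(G)$ in $[b_\chi,\partial^L_1(G)]$, and the counting identity converts this into the bound on $\mu_G([0,b_\chi))$; your observation that $\chi\le n-1$ forces $\ell_1\ge 2$ is a small clarification the paper leaves implicit.
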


\begin{proof}
	Let $V(G)=C_1\dot\cup C_2\dot\cup\cdots\dot\cup C_\chi$ be the fixed optimal $\chi$-coloring, with
	\[
	\ell_1=|C_1|\ge \ell_2=|C_2|\ge\cdots\ge \ell_\chi=|C_\chi|.
	\]
	By Theorem~\ref{thm:color-majorization}, the first $\ell_1-1$ distance Laplacian eigenvalues satisfy
	 $\partial_i^L(G)\ge n+\ell_1$ for  $1\le i\le \ell_1-1.$ Since the color classes form a partition of the $n$ vertices into $\chi$ nonempty sets, their average size is $n/\chi$. Therefore the largest color class has size at least the ceiling of this average, that is,
	\begin{equation}\label{eq:ell1-pigeonhole}
		\ell_1\ge \ceil{\frac{n}{\chi}}.
	\end{equation}
	It follows that, for every $1\le i\le \ell_1-1$, we have
	\[
	\partial_i^L(G)
	\ge n+\ell_1
	\ge n+\ceil{\frac{n}{\chi}}
	=
	b_\chi.
	\]
	Thus, the eigenvalues $\partial_1^L(G),\partial_2^L(G),\dots,\partial_{\ell_1-1}^L(G)$ 	all lie in the interval
	 $[b_\chi,\partial_1^L(G)].$ 	Hence, we have  $\left|\left\{i\in\{1,\dots,n\}:\partial_i^L(G)\ge b_\chi\right\}\right|
	\ge \ell_1-1.$ By \eqref{eq:ell1-pigeonhole}, we also have
	\[
	\ell_1-1\ge \ceil{\frac{n}{\chi}}-1.
	\]
	 It remains only to translate this into the equivalent upper bound for the number of eigenvalues below $b_\chi$. Since the eigenvalues are counted with algebraic multiplicity and are ordered from largest to smallest, every eigenvalue lies either in
	 $[0,b_\chi)$ or in $[b_\chi,\partial_1^L(G)].$ 
	Therefore, we obtain
	\[
	\mu_G([0,b_\chi))
	+
	\mu_G([b_\chi,\partial_1^L(G)])
	=
	n.
	\]
	The inequality just proved gives
	\[
	\mu_G([b_\chi,\partial_1^L(G)])
	\ge
	\ell_1-1
	\ge
	\ceil{\frac{n}{\chi}}-1.
	\]
	Consequently, we obtain
	\[
	\mu_G([0,b_\chi))
	=
	n-\mu_G([b_\chi,\partial_1^L(G)])
	\le
	n-\left(\ceil{\frac{n}{\chi}}-1\right).
	\]
	Thus, we have  $\mu_G([0,b_\chi))
	\le
	n-\ceil{\tfrac{n}{\chi}}+1$ 
\end{proof}

\medskip
\noindent The next corollary gives a direct higher-index refinement. The classical second-eigenvalue assertion of
Pirzada and Khan \cite{PirzadaKhan2021} is recalled in the final sentence to avoid a gap in the index range.

\begin{corollary}\label{cor:generalize-2-5}
Let $G$ be connected with $n\ge4$ and chromatic number $\chi\le n-1$. Then, for every integer $k$ satisfying
 $2\le k\le \ceil{\tfrac{n}{\chi}}-1,$ we have $ \partial_k^L(G)\ge b_\chi.$ In particular, when $\ceil{n/\chi}\ge3$ the assertion includes $k=2$.
\end{corollary}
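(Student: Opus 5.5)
The corollary follows directly from Theorem~\ref{thm:color-majorization} and the pigeonhole estimate \eqref{eq:ell1-pigeonhole}. We fix the optimal $\chi$-coloring of Section~\ref{section 3}, with color-class sizes $\ell_1\ge\cdots\ge\ell_\chi$, and take an integer $k$ with $2\le k\le \ceil{n/\chi}-1$. The largest class satisfies $\ell_1\ge\ceil{n/\chi}$, since the $\chi$ classes partition $n$ vertices. Hence $k\le \ceil{n/\chi}-1\le \ell_1-1$, so $k$ lies in the index range $1\le i\le \ell_1-1$ covered by the first assertion of Theorem~\ref{thm:color-majorization}. That theorem gives
\[
\partial_k^L(G)\ge n+\ell_1\ge n+\ceil{\frac{n}{\chi}}=b_\chi .
\]
The chain of inequalities is the same one used in the proof of Corollary~\ref{cor:many-above-bchi}, now applied to a single index $k$ rather than counted.

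There are two bookkeeping points. First, the admissible range of $k$ is nonempty only when $\ceil{n/\chi}\ge3$. In that case $k=2$ is included, which recovers the second-eigenvalue statement of Pirzada and Khan \cite{PirzadaKhan2021} for these parameters. When $\ceil{n/\chi}\le2$ the range is empty and the first assertion is vacuous; for the ``in particular'' clause we only need to observe that $\ceil{n/\chi}\ge3$ places $k=2$ in the range. Second, the hypotheses $\chi\le n-1$ and $n\ge4$ are not needed for the inequality itself. They only rule out $G\cong K_n$, where every class is a singleton and the statement is empty, and they match the setting of \cite{PirzadaKhan2021}.

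No step is a real obstacle, since the whole argument is an instance of the block bound already proved. The only care required is the integer comparison $\ceil{n/\chi}-1\le\ell_1-1$, which justifies placing $k$ inside the first block $(n+\ell_1)^{(\ell_1-1)}$ of the spectrum of the comparison graph $K_{\ell_1,\dots,\ell_\chi}$. The corollary also does not depend on which optimal coloring is fixed, because $\ell_1\ge\ceil{n/\chi}$ holds for every one of them.
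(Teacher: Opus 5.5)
Your proof is correct and matches the paper's own argument: the pigeonhole bound $\ell_1\ge\ceil{n/\chi}$ gives $k\le\ell_1-1$, so the first assertion of Theorem~\ref{thm:color-majorization} yields $\partial_k^L(G)\ge n+\ell_1\ge b_\chi$. Your side remarks also hold. The index range is nonempty exactly when $\ceil{n/\chi}\ge3$, and the hypotheses $n\ge4$ and $\chi\le n-1$ are not needed for the inequality itself.
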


\begin{proof}
Let $k$ satisfy $2\le k\le \ceil{n/\chi}-1$. By \eqref{eq:ell1-pigeonhole},
$\ell_1\ge\ceil{n/\chi}$, and hence
\[
    k\le \ceil{\frac{n}{\chi}}-1\le \ell_1-1.
\]
Therefore, from Theorem \ref{thm:color-majorization}, we have
\[
    \partial_k^L(G)\ge n+\ell_1\ge n+\ceil{\frac{n}{\chi}}=b_\chi.
\]
The final statement records the endpoint case supplied by \cite{PirzadaKhan2021}.
\end{proof}

\medskip
\noindent We note that,  in the remaining range $\chi\le n-2$ with
$\ceil{n/\chi}=2$, the second-eigenvalue bound $\partial_2^L(G)\ge b_\chi$ in Corollary \ref{cor:generalize-2-5}  is precisely the theorem of Pirzada and Khan \cite{PirzadaKhan2021}.

\medskip 
\noindent  The following consequence refines the distribution viewpoint in Theorem 3.4 of \cite{PirzadaKhan2021}.

\begin{corollary}\label{cor:generalize-3-4}
Let $G$ be connected with chromatic number $\chi\le n-1$, and let $\ell_1$ be the largest color-class size in a fixed optimal
$\chi$-coloring. Then
\[
    \mu_G([b_\chi,\partial_1^L(G)])\ge \ell_1-1\ge \ceil{\frac{n}{\chi}}-1.
\]
Moreover, if $\ell_1\ge4$, then $\mu_G([b_\chi,\partial_1^L(G)])\ge3.$ 
\end{corollary}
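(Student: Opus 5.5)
The first inequality is a restatement of Corollary~\ref{cor:many-above-bchi}, and I would derive it the same way. Theorem~\ref{thm:color-majorization} gives $\partial_i^L(G)\ge n+\ell_1$ for $1\le i\le \ell_1-1$. The pigeonhole bound \eqref{eq:ell1-pigeonhole} gives $\ell_1\ge\ceil{n/\chi}$, so $n+\ell_1\ge b_\chi$. Hence the eigenvalues $\partial_1^L(G),\dots,\partial_{\ell_1-1}^L(G)$ all lie in $[b_\chi,\partial_1^L(G)]$. Counting with multiplicity gives $\mu_G([b_\chi,\partial_1^L(G)])\ge \ell_1-1\ge\ceil{n/\chi}-1$.

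For the ``moreover'' part, the hypothesis $\ell_1\ge4$ gives $\ell_1-1\ge3$. The indices $i=1,2,3$ all satisfy $i\le\ell_1-1$, so Theorem~\ref{thm:color-majorization} yields
\[
\partial_3^L(G)\ge n+\ell_1\ge b_\chi .
\]
Together with $\partial_1^L(G)\ge\partial_2^L(G)\ge\partial_3^L(G)$, at least three eigenvalues lie in $[b_\chi,\partial_1^L(G)]$. Thus $\mu_G([b_\chi,\partial_1^L(G)])\ge3$.

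The only point to check is that the hypothesis $\chi\le n-1$ is consistent with the statement. It guarantees $G\ncong K_n$, so some color class has size at least $2$ and $\ell_1\ge2$; the count $\ell_1-1$ is then meaningful, although the bound holds trivially even when $\ell_1=1$. There is no real obstacle, since the result is a direct repackaging of Theorem~\ref{thm:color-majorization} and Corollary~\ref{cor:many-above-bchi}. The one subtlety is that $\ell_1$ depends on the fixed optimal coloring, but the argument holds for any fixed choice, so the bound holds for every optimal coloring.
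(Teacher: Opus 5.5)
Your proof is correct and follows the paper's argument essentially verbatim. The paper also obtains the first bound from Corollary~\ref{cor:many-above-bchi}, which is Theorem~\ref{thm:color-majorization} together with the pigeonhole bound \eqref{eq:ell1-pigeonhole}, and reads off the ``moreover'' part directly from $\ell_1-1\ge 3$.
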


\begin{proof}
By Corollary \ref{cor:many-above-bchi}, at least $\ell_1-1$ eigenvalues satisfy $\partial_i^L(G)\ge b_\chi$. Since
$\partial_1^L(G)$ is the largest eigenvalue, all such eigenvalues lie in $[b_\chi,\partial_1^L(G)]$. The inequality
$\ell_1-1\ge\ceil{n/\chi}-1$ follows from \eqref{eq:ell1-pigeonhole}. If $\ell_1\ge4$, then $\ell_1-1\ge3$.
\end{proof}

\medskip
\noindent We next recall a basic fact connecting the eigenvalue $n$ with the complement graph.

\begin{theorem}[Aouchiche--Hansen \cite{AouchicheHansen2013}]\label{thm:n-mult-complement}
Let $G$ be connected on $n$ vertices, and let $c(\comp{G})$ be the number of connected components of the complement graph.
Then $n$ is a distance Laplacian eigenvalue of $G$ with multiplicity
 $m_G(n)=c(\comp{G})-1.$ 
\end{theorem}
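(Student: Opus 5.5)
The plan is to write $\DL(G)$ as a rank-one correction of $nI$, plus the Laplacian of $\comp{G}$, plus a positive semidefinite remainder. Then we read off the eigenspace of $n$ on $\mathbf{1}^{\perp}$.

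\emph{Step 1: decomposition.} For every pair $u\ne v$ put $w(u,v)=d_G(u,v)-2$ if $d_G(u,v)\ge 2$, and $w(u,v)=0$ if $uv\in E(G)$. Let
\[
M=\sum_{\{u,v\}} w(u,v)\,(e_u-e_v)(e_u-e_v)^{T},
\]
which is the Laplacian of a nonnegatively weighted graph and hence positive semidefinite. Using Lemma~\ref{lem:quadratic-form}, write $d_G(u,v)=1$ on edges of $G$ and $d_G(u,v)=2+w(u,v)$ on edges of $\comp{G}$. This gives
\[
x^{T}\DL(G)x=\sum_{uv\in E(G)}(x_u-x_v)^2+2\sum_{uv\in E(\comp{G})}(x_u-x_v)^2+x^{T}Mx .
\]
Now insert $\sum_{\{u,v\}}(x_u-x_v)^2=x^{T}(nI-J)x$, which is the identity $L(G)+L(\comp{G})=nI-J$. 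The result is
\[
\DL(G)=nI-J+L(\comp{G})+M .
\]
I expect this bookkeeping step to be the only real work. The point to check carefully is that the diagonal transmissions match, which follows automatically since both sides are Laplacian-type forms determined by their quadratic forms.

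\emph{Step 2: restrict to $\mathbf{1}^{\perp}$.} All of $\DL(G)$, $L(\comp{G})$ and $M$ annihilate $\mathbf{1}$. Since $n\neq 0$ and $\DL(G)$ is symmetric, every eigenvector for $n$ may be taken orthogonal to $\mathbf{1}$. On $\mathbf{1}^{\perp}$ we have $J=0$, so $\DL(G)=nI+L(\comp{G})+M$ there, and $L(\comp{G})+M$ is positive semidefinite.

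Hence, for $x\perp\mathbf{1}$, $\DL(G)x=nx$ holds iff $(L(\comp{G})+M)x=0$. This in turn holds iff $x^{T}L(\comp{G})x=0$ and $x^{T}Mx=0$.

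\emph{Step 3: identify the eigenspace.} The condition $x^{T}L(\comp{G})x=0$ says that $x$ is constant on each component of $\comp{G}$. For the second condition, $w(u,v)>0$ only when $d_G(u,v)\ge3$, and then $uv\in E(\comp{G})$. So $u,v$ lie in the same component of $\comp{G}$, and $x^{T}Mx=0$ follows automatically. Therefore the $n$-eigenspace is exactly the space of vectors that are constant on the components of $\comp{G}$ and orthogonal to $\mathbf{1}$. This space has dimension $c(\comp{G})-1$, and this gives $m_G(n)=c(\comp{G})-1$, including multiplicity $0$ when $\comp{G}$ is connected.
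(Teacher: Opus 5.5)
Your proof is correct and complete. The paper gives no proof of this statement. It quotes the result from \cite{AouchicheHansen2013} and uses it as a black box in Corollaries~\ref{cor:upper-by-complement} and~\ref{cor:sandwich}, so your argument is an independent, self-contained route and there is no in-paper proof to match.

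The only comparison tool the paper itself recalls is edge monotonicity (Theorem~\ref{thm:monotone}). By itself it gives just $\partial^{L}_{n-1}(G)\ge\partial^{L}_{n-1}(K_n)=n$. It cannot detect the exact multiplicity, nor tell connected $\comp{G}$ from disconnected $\comp{G}$. Your identity
\[
\DL(G)=nI-J+L(\comp{G})+M,\qquad M\succeq 0 \text{ supported on pairs at distance at least } 3,
\]
settles both questions at once, uniformly in the diameter. The decisive point is your Step~3 observation: $M$ lives on edges of $\comp{G}$, so it vanishes on vectors that are constant on the components of $\comp{G}$.

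The decomposition also gives more than the statement.
\begin{itemize}
\item Restricting to $\mathbf{1}^{\perp}$ and applying Weyl's inequality gives $\partial^{L}_{i}(G)\ge n+\lambda_i(L(\comp{G}))$ for $1\le i\le n-1$. Equality holds when $\diam(G)\le 2$, since then $M=0$.
\item For $G=K_{\ell_1,\dots,\ell_k}$ we have $\comp{G}=K_{\ell_1}\cup\cdots\cup K_{\ell_k}$, and this reproduces Lemma~\ref{lem:multipartite-spectrum} directly.
\item If $G$ is a spanning subgraph of its multipartite completion, then $L(\comp{G})\succeq L(K_{\ell_1}\cup\cdots\cup K_{\ell_\chi})$. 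Combined with the Weyl bound, this yields Theorem~\ref{thm:color-majorization} without appealing to Theorem~\ref{thm:monotone}.
\end{itemize}

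One minor precision in Step~2: eigenvectors for $n$ are not merely ``may be taken'' orthogonal to $\mathbf{1}$. They must be, since $n\,\mathbf{1}^{T}x=\mathbf{1}^{T}\DL(G)x=0$.
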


\noindent The following corollary sharpens the degree-$n-1$ distribution bound by identifying the controlling invariant as
$c(\comp{G})$.
\begin{corollary}\label{cor:upper-by-complement}
Let $G$ be connected with chromatic number $\chi\le n-1$. Then
\[
    \mu_G([b_\chi,\partial_1^L(G)])\le n-c(\comp{G}).
\]
In particular, if $G$ has $P$ universal vertices, that is, vertices of degree $n-1$, then $c(\comp{G})\ge P+1$, and hence
\[
    \mu_G([b_\chi,\partial_1^L(G)])\le n-P-1.
\]
\end{corollary}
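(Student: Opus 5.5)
The plan is to count the eigenvalues of $\DL(G)$ lying strictly below $b_\chi$ and to show that there are at least $c(\comp{G})$ of them. Once this holds, \eqref{eq:count-identity} gives $\mu_G([b_\chi,\partial_1^L(G)])\le n-c(\comp{G})$ immediately.

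The eigenvalues below $b_\chi$ come from two sources. First, $\partial^L_n(G)=0<b_\chi$ contributes one. Second, Theorem~\ref{thm:n-mult-complement} says that $n$ is an eigenvalue of $\DL(G)$ with multiplicity exactly $c(\comp{G})-1$. Since $\chi\ge1$, we have $\ceil{n/\chi}\ge1$, so $n<n+\ceil{n/\chi}=b_\chi$, and all copies of $n$ lie in $[0,b_\chi)$. Because $0\neq n$ (as $n\ge2$, which is implicit since $\chi\le n-1$ forces $n\ge2$), these eigenvalues are distinct and can be counted together, giving
\[
\mu_G([0,b_\chi))\ge 1+\bigl(c(\comp{G})-1\bigr)=c(\comp{G}).
\]
Subtracting from $n$ via \eqref{eq:count-identity} yields the first inequality.

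For the second assertion, let $u_1,\dots,u_P$ be the universal vertices of $G$. Each $u_i$ is isolated in $\comp{G}$ and so forms its own component, giving $P$ singleton components. The remaining $n-P$ vertices form at least one further component provided $n-P\ge1$. This requires that $G$ is not complete, which holds because $\chi\le n-1$ rules out $G\cong K_n$. A complete graph is the only way all vertices could be universal, so $P\le n-1$. Hence $c(\comp{G})\ge P+1$, and substituting into the first bound gives $\mu_G([b_\chi,\partial_1^L(G)])\le n-P-1$.

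The main thing to check carefully is the hypothesis bookkeeping: Theorem~\ref{thm:n-mult-complement} must give the multiplicity exactly, or at least as a lower bound, and the strict inequality $n<b_\chi$ is what places those eigenvalues on the correct side of the threshold. No step seems genuinely hard; the hypothesis $\chi\le n-1$ is used only to exclude $K_n$ in the universal-vertex count.
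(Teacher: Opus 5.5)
Your proposal is correct and follows the paper's proof essentially step for step. You count the zero eigenvalue and the $c(\comp{G})-1$ copies of $n$ (from Theorem~\ref{thm:n-mult-complement}) below $b_\chi$, then use that universal vertices are isolated in $\comp{G}$ and that $G\ncong K_n$. Your remark that $n<b_\chi$ needs only $\ceil{n/\chi}\ge 1$, so that $\chi\le n-1$ matters only in the universal-vertex step, is a valid minor refinement of the paper's appeal to $\ceil{n/\chi}\ge 2$.
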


\begin{proof}
Since $\chi\le n-1$, we have $\ceil{n/\chi}\ge2$, so $b_\chi>n$. Thus all eigenvalues equal to $n$ lie strictly below
$b_\chi$. By Theorem \ref{thm:n-mult-complement}, there are $c(\comp{G})-1$ such eigenvalues. In addition,
$\partial_n^L(G)=0<b_\chi$. Hence at least $c(\comp{G})$ eigenvalues lie in $[0,b_\chi)$, and at most
$n-c(\comp{G})$ eigenvalues can lie in $[b_\chi,\partial_1^L(G)]$. If $G$ has $P$ universal vertices, then those vertices are
isolated in $\comp{G}$. Since $G$ is not complete when $\chi\le n-1$, the complement also contains at least one further component,
and therefore $c(\comp{G})\ge P+1$.
\end{proof}

\section{Structural results via twin cliques and twin independent sets}\label{section 4}

We now revisit clique and independent-set distribution results by comparing the forced eigenvalues
$\Tr_G(v)+1$ and $\Tr_G(v)+2$ with the chromatic threshold $b_\chi$, using neighborhood-compression parameters.

\noindent If $H$ is a clique and $N_G(v)\setminus H=N$ for every $v\in H$, then every vertex of
$R=V(G)\setminus(H\cup N)$ is nonadjacent to every vertex of $H$; otherwise it would belong to $N$. Consequently,
$d_G(v,u)\ge2$ for all $v\in H$ and $u\in R$.

\begin{theorem}\label{thm:clique-refine}
Let $G$ be connected on $n$ vertices with chromatic number $\chi$. Assume that $G$ contains a clique $H$ of size $s\ge2$ such
that $N_G(v)\setminus H=N$ for every $v\in H, $ for some fixed set $N\subseteq V(G)\setminus H$. Let
$R=V(G)\setminus(H\cup N)$, so $|R|=n-s-|N|$. Then the following hold.
\begin{enumerate}[label=\textup{(\alph*)},noitemsep]
    \item $\DL(G)$ has an eigenvalue $\lambda_H=\Tr_G(v)+1$ with multiplicity at least $s-1$, for any $v\in H$.
    \item The lower estimate $\lambda_H\ge 2n-s-|N|$ holds.
    \item If $s+|N|\le n-\ceil{n/\chi}$, then $\lambda_H\ge b_\chi$, and hence
     $ \mu_G([b_\chi,\partial_1^L(G)])\ge s-1.$ 
\end{enumerate}
\end{theorem}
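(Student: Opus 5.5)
Part (a) follows directly from Theorem~\ref{thm:clique-twins}. The hypothesis $N_G(v)\setminus H=N$ for every $v\in H$ is exactly the condition $N_G(v_i)\setminus H=N_G(v_j)\setminus H$ for all $i,j$. That theorem then gives $\Tr_G(v_1)=\cdots=\Tr_G(v_s)$, so $\lambda_H=\Tr_G(v)+1$ does not depend on the choice of $v\in H$. It also shows that $\lambda_H$ is an eigenvalue of $\DL(G)$ with multiplicity at least $s-1$.

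For (b), fix $v\in H$ and split $\Tr_G(v)$ according to the partition $V(G)=H\cup N\cup R$.
\begin{itemize}[noitemsep]
\item Each of the $s-1$ other vertices of $H$ is adjacent to $v$, since $H$ is a clique, so each contributes $1$.
\item Each vertex of $N$ is adjacent to $v$ by definition of $N$, so each contributes $1$.
\item By the remark preceding the theorem, every $u\in R$ is nonadjacent to every vertex of $H$, so $d_G(v,u)\ge2$.
\end{itemize}
Summing these contributions,
\[
\Tr_G(v)\ge (s-1)+|N|+2(n-s-|N|)=2n-s-|N|-1,
\]
and therefore $\lambda_H\ge 2n-s-|N|$. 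Connectivity of $G$ guarantees that all these distances are finite, so the bound is meaningful.

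For (c), assume $s+|N|\le n-\ceil{n/\chi}$. Then
\[
2n-s-|N|\ge n+\ceil{n/\chi}=b_\chi,
\]
so (b) gives $\lambda_H\ge b_\chi$. By (a), $\lambda_H$ occurs at least $s-1$ times in the spectrum, counted with multiplicity. Every eigenvalue is at most $\partial_1^L(G)$, so these $s-1$ copies all lie in $[b_\chi,\partial_1^L(G)]$. Hence $\mu_G([b_\chi,\partial_1^L(G)])\ge s-1$.

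There is no real obstacle in this argument. The only point needing care is the transmission count in (b): the partition must be exhaustive and disjoint, with $v$ itself excluded from $H$ so that $H$ contributes $s-1$ rather than $s$. The argument also relies on the multiplicity being counted algebraically, which is consistent with the definition of $\mu_G$. The hypothesis in (c) implicitly forces $R\ne\emptyset$ whenever $\chi\le n-1$, but the argument never needs this fact.
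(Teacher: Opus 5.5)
Your proof is correct and follows the paper's argument essentially line by line: (a) is read off from Theorem~\ref{thm:clique-twins}, (b) bounds $\Tr_G(v)$ by splitting the other vertices into $H\setminus\{v\}$, $N$, and $R$ at distances $1$, $1$, and at least $2$, and (c) combines $2n-s-|N|\ge b_\chi$ with the multiplicity $s-1$ from (a). Your side remark on algebraic multiplicity is harmless, since $\DL(G)$ is symmetric and so its algebraic and geometric multiplicities coincide.
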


\begin{proof}
(a) The hypothesis says that the vertices of $H$ form a clique of twins in the sense of Theorem \ref{thm:clique-twins}.
Therefore $\Tr_G(v)$ is constant on $H$, and $\Tr_G(v)+1$ is a distance Laplacian eigenvalue with multiplicity at least $s-1$.
We denote this eigenvalue by $\lambda_H$. (b) For fixed $v\in H$, the vertices in $H\setminus\{v\}$ and in $N$ are adjacent to $v$, while every vertex in $R$ is nonadjacent to
$v$ and hence has distance at least $2$ from $v$. Thus, we have
\[
\Tr_G(v)
\ge (s-1)+|N|+2|R|.
\]
Since $|R|=n-s-|N|$, so we obtain
\[
\Tr_G(v)\ge (s-1)+|N|+2(n-s-|N|)=2n-s-|N|-1.
\]
Therefore, we obtain
\[
\lambda_H=\Tr_G(v)+1\ge 2n-s-|N|.
\]
 (c) If $s+|N|\le n-\ceil{n/\chi}$, then
\[
2n-s-|N|
\ge 2n-\left(n-\ceil{\frac{n}{\chi}}\right)
=n+\ceil{\frac{n}{\chi}}=b_\chi.
\]
By (b), $\lambda_H\ge b_\chi$. Since $\lambda_H$ has multiplicity at least $s-1$ and is no larger than
$\partial_1^L(G)$, at least $s-1$ eigenvalues lie in $[b_\chi,\partial_1^L(G)]$.
\end{proof}

\medskip
\noindent  If $M$ is an independent set and $N_G(v)=N$ for every $v\in M$, then for distinct $x,y\in M$ one has $d_G(x,y)=2$. Indeed,
$x$ and $y$ are nonadjacent, while $N\neq\varnothing$, since $G$ is connected and $s\ge2$, so a common neighbor in $N$ gives a
path of length $2$.

\begin{theorem}\label{thm:indep-refine}
Let $G$ be connected on $n$ vertices with chromatic number $\chi$. Assume that $G$ contains an independent set $M$ of size
$s\ge2$ such that $N_G(v)=N$ for every $v\in M,$ for some fixed set $N\subseteq V(G)\setminus M$. Let $R=V(G)\setminus(M\cup N)$. Then the following hold.
\begin{enumerate}[label=\textup{(\alph*)},noitemsep]
    \item $\DL(G)$ has an eigenvalue $\lambda_M=\Tr_G(v)+2$ with multiplicity at least $s-1$, for any $v\in M$.
    \item The lower estimate $\lambda_M\ge 2n-|N|$ holds.
    \item If $|N|\le n-\ceil{n/\chi}$, then $\lambda_M\ge b_\chi$, and hence
    \[
        \mu_G([b_\chi,\partial_1^L(G)])\ge s-1.
    \]
\end{enumerate}
\end{theorem}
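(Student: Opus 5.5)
The plan is to mirror the proof of Theorem~\ref{thm:clique-refine}, replacing the clique-twin principle by Theorem~\ref{thm:indep-twins}.

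For (a), the hypothesis that $M$ is independent with $N_G(v)=N$ for all $v\in M$ is exactly the setting of Theorem~\ref{thm:indep-twins}. Hence $\Tr_G(v)$ is constant on $M$, and $\lambda_M=\Tr_G(v)+2$ is an eigenvalue of multiplicity at least $s-1$.

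For (b), fix $v\in M$ and split $V(G)\setminus\{v\}$ into three parts.
\begin{itemize}
\item $N$: these $|N|$ vertices are at distance $1$ from $v$.
\item $M\setminus\{v\}$: these $s-1$ vertices are at distance exactly $2$, by the remark preceding the theorem. Here $N\neq\varnothing$, since $G$ is connected and $s\ge2$.
\item $R$: no vertex of $R$ is adjacent to $v$, because $N_G(v)=N$ and $R\cap N=\varnothing$. So every vertex of $R$ is at distance at least $2$.
\end{itemize}
Using $|R|=n-s-|N|$, this gives
\[
\Tr_G(v)\ge |N|+2(s-1)+2(n-s-|N|)=2n-|N|-2,
\]
and therefore $\lambda_M\ge 2n-|N|$.

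For (c), if $|N|\le n-\ceil{n/\chi}$, then
\[
2n-|N|\ge n+\ceil{n/\chi}=b_\chi,
\]
so $\lambda_M\ge b_\chi$. Since $\lambda_M\le\partial_1^L(G)$ and its multiplicity is at least $s-1$, at least $s-1$ eigenvalues lie in $[b_\chi,\partial_1^L(G)]$.

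The only step needing care is the distance bookkeeping in (b). One must check that the parts $N$, $M\setminus\{v\}$ and $R$ are disjoint and together exhaust $V(G)\setminus\{v\}$, which holds because $M$ is independent and $N\subseteq V(G)\setminus M$. One must also check that $R$ contributes at least $2$ per vertex, which holds because these vertices are not neighbours of $v$. No genuine obstacle is expected.
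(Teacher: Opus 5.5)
Your proposal is correct and matches the paper's proof step for step. Part (a) is Theorem~\ref{thm:indep-twins}. Part (b) splits $V(G)\setminus\{v\}$ into $N$, $M\setminus\{v\}$ and $R$ to get $\Tr_G(v)\ge 2n-|N|-2$, and part (c) uses the same arithmetic and the multiplicity $s-1$ count.
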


\begin{proof}
(a) By assumption, the vertices of $M$ are pairwise nonadjacent and have identical neighborhoods. Theorem
\ref{thm:indep-twins} therefore gives that $\Tr_G(v)$ is constant on $M$, and that $\Tr_G(v)+2$ is a distance Laplacian
eigenvalue with multiplicity at least $s-1$. We denote this eigenvalue by $\lambda_M$.\\ (b) For fixed $v\in M$, and for every $u\in M\setminus\{v\}$, the preceding observation gives $d_G(v,u)=2$. Also, $d_G(v,u)=1$ for every $u\in N$, and $d_G(v,u)\ge2$ for every $u\in R$, since such a vertex is neither $v$ nor adjacent to
$v$. Therefore, we have
\[
\Tr_G(v)
\ge 2(s-1)+|N|+2|R|.
\]
Since $|R|=n-s-|N|$, this gives
\[
\Tr_G(v)\ge 2(s-1)+|N|+2(n-s-|N|)=2n-|N|-2.
\]
Consequently, $\lambda_M=\Tr_G(v)+2\ge 2n-|N|.$\newline (c) If $|N|\le n-\ceil{n/\chi}$, then
\[
2n-|N|
\ge 2n-\left(n-\ceil{\frac{n}{\chi}}\right)
=n+\ceil{\frac{n}{\chi}}=b_\chi.
\]
By (b), $\lambda_M\ge b_\chi$. Since $\lambda_M$ has multiplicity at least $s-1$, these occurrences lie in
$[b_\chi,\partial_1^L(G)]$, and the desired estimate follows.
\end{proof}

\begin{remark}
Theorems \ref{thm:clique-refine} and \ref{thm:indep-refine} replace earlier size restrictions, such as
$|H|\ge n/2$ or $|M|\ge (n-3)/2$ in distribution statements of \cite{PirzadaKhan2021}, by compression quantities. For clique
twins, the decisive parameter is $s+|N|$, and  for independent twins, it is $|N|$. Even small twin classes can force multiple
eigenvalues above $b_\chi$ when their external neighborhoods are sufficiently small.
\end{remark}

\section{Minimum distance Laplacian spectral radius at fixed chromatic number}\label{sec:extremal}

We now consider the minimum possible value of $\partial_1^L(G)$ among connected graphs with fixed order and fixed chromatic
number. Theorem \ref{thm:monotone} implies that adding edges to a connected graph cannot increase any distance Laplacian
eigenvalue. Consequently, minimizers should be sought among edge-maximal graphs with the prescribed chromatic number. The next
statement records the equality case of the chromatic bound in a form suitable for our purposes; it is the equality part of the
sharp bound in \cite{AouchicheHansen2017}, expressed through the multipartite completion.

\noindent  We use the equality condition in the chromatic bound of Aouchiche and Hansen \cite{AouchicheHansen2017}, equality occurs precisely when the graph is the complete multipartite graph obtained from an optimal coloring whose largest class has size $\ceil{n/\chi}$.
\begin{lemma}\label{lem:equality-AH}
Let $G$ be connected on $n$ vertices with $2\le\chi(G)\le n-1$. Then $  \partial_1^L(G)=n+\ceil{\tfrac{n}{\chi(G)}}$ holds if and only if $G$ is a complete $\chi(G)$-partite graph whose largest part has size $\ceil{n/\chi(G)}$. 
\end{lemma}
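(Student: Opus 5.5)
The ``if'' direction should follow directly from Lemma~\ref{lem:multipartite-spectrum}; the ``only if'' direction combines Theorem~\ref{thm:color-majorization} with an eigenvector-rigidity argument that makes a color class dominating. After that the problem becomes one about algebraic connectivity, and that last step is the one I have not completed.

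\emph{If.} Suppose $G=K_{\ell_1,\dots,\ell_\chi}$ with $\ell_1=\ceil{n/\chi}$. Since $\chi\le n-1$, we have $\ell_1\ge 2$, so $p\ge1$. Lemma~\ref{lem:multipartite-spectrum} then gives $\partial_1^L(G)=n+\ell_1=b_\chi$.

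\emph{Only if, first step.} Fix an optimal coloring with classes $C_1,\dots,C_\chi$ and let $H$ be its complete multipartite completion. By Theorem~\ref{thm:color-majorization} and \eqref{eq:ell1-pigeonhole},
\[
\partial_1^L(G)\ge n+\ell_1\ge n+\ceil{n/\chi}.
\]
Equality therefore forces $\ell_1=\ceil{n/\chi}\ge2$ and $\partial_1^L(G)=\partial_1^L(H)=n+\ell_1=:\lambda$.

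\emph{Only if, rigidity.} I will avoid relying on Theorem~\ref{thm:monotone} alone, since it gives only weak inequalities. Instead I use Lemma~\ref{lem:quadratic-form}. Because $G\subseteq H$, we have $d_G\ge d_H$ pointwise, so
\[
\DL(G)-\DL(H)=\sum_{\{y,z\}}\bigl(d_G(y,z)-d_H(y,z)\bigr)(e_y-e_z)(e_y-e_z)^T
\]
is positive semidefinite. Let $E$ be the space of vectors supported on $C_1$ with coordinate sum $0$; it lies in the $\lambda$-eigenspace of $\DL(H)$. For $x\in E$ we get
\[
\lambda\|x\|^2\ge x^T\DL(G)x\ge x^T\DL(H)x=\lambda\|x\|^2,
\]
so every pair with $d_G(y,z)>d_H(y,z)$ must satisfy $x_y=x_z$ for all $x\in E$. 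Since $\ell_1\ge2$, this rules out any such pair meeting $C_1$. Hence every vertex of $C_1$ is adjacent to every vertex outside $C_1$, and $G=\comp{K_{\ell_1}}\vee G'$ with $G'=G-C_1$. In particular $\diam(G)\le2$.

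\emph{Only if, reduction to algebraic connectivity.} For a graph of diameter at most $2$, $D=2J-2I-A$, and a direct computation gives $\DL(G)=2nI-2J-L(G)$. Hence $\partial_1^L(G)=2n-a(G)$, where $a(G)$ is the algebraic connectivity. The Laplacian spectrum of a join yields
\[
a(G)=\min\{\,n-\ell_1,\ \ell_1+a(G')\,\}.
\]
So equality $\partial_1^L(G)=n+\ell_1$ is equivalent to $a(G')\ge n'-\ell_1$, where $n'=n-\ell_1$ and $G'$ has chromatic number $\chi-1$, with classes $C_2,\dots,C_\chi$ of sizes at most $\ell_1$.

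\emph{Main obstacle.} It remains to show that $a(G')\ge n'-\ell_1$ forces $G'$ to be complete $(\chi-1)$-partite on $C_2,\dots,C_\chi$. Suppose an edge $uv$ with $u\in C_a$, $v\in C_b$ is missing from $G'$.
\begin{itemize}[noitemsep]
\item The test vector $e_u-e_w$ with $w\in C_a$ gives $a(G')\le(\deg u+\deg w)/2\le n'-\ell_a-\tfrac12$. This is a contradiction only when $\ell_a=\ell_1$.
\item I would next try the test vector $e_u-e_v$, or a combination of vectors supported on $C_a\cup C_b$. The aim is to show that a missing cross edge pushes $a(G')$ below $n'-\ell_1$, using that $G'$ is not complete.
\item If that fails, I would induct on $\chi$, applying the argument above to a largest class of $G'$.
\end{itemize}
Any inductive route must also handle singleton classes ($\ell_a=1$) and must keep track of how $\ceil{n/\chi}$ compares with $\ceil{n'/(\chi-1)}$. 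I do not yet have this step, and it is where the proof would fail if it fails.
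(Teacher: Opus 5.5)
Your ``if'' direction, your rigidity step (every vertex of $C_1$ is adjacent to every vertex outside $C_1$, so $\diam(G)\le 2$), and your reduction of equality to $a(G')\ge n'-\ell_1$ are all correct. The gap is the step you flagged, and it cannot be closed. The implication ``$a(G')\ge n'-\ell_1$ forces $G'$ to be complete $(\chi-1)$-partite'' is false, and so is the ``only if'' half of the lemma.

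\emph{Counterexample.} Take $G=K_{3,3}\vee(K_2\cup K_1)$, that is, $K_{3,3,2,1}$ with parts $A,B,\{x,y\},\{z\}$ and the single edge $yz$ deleted.
\begin{itemize}[noitemsep]
\item Here $n=9$. For $a\in A$ and $b\in B$ the set $\{a,b,x,z\}$ is a clique, so $\chi(G)=4$ and $b_\chi=9+\ceil{9/4}=12$.
\item Since $\diam(G)=2$, your identity can be rewritten as $\DL(G)=nI-J+L(\comp{G})$. On $\mathbf{1}^\perp$ the eigenvalues are $n+\mu$, where $\mu$ runs over the Laplacian spectrum $\{0^{(3)},1,3^{(5)}\}$ of $\comp{G}=2K_3\cup P_3$ with one zero removed.
\item Hence the distance Laplacian spectrum of $G$ is $(12^{(5)},10,9^{(2)},0)$, and $\partial_1^L(G)=12=b_\chi$.
\item Yet $G$ is not complete multipartite, because $\comp{G}$ is not a disjoint union of cliques.
\end{itemize}

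\emph{Where your attempts break.} In your notation, $C_1=A$, $G'=\comp{K_3}\vee(K_2\cup K_1)$ and $n'=6$, so $a(G')=\min\{3,\,3+0\}=3=n'-\ell_1$. The missing cross edge $yz$ joins classes of sizes $2$ and $1$, both smaller than $\ell_1$. The test vector $e_y-e_z$ only gives $a(G')\le (3+4)/2$, which is compatible with $a(G')\ge 3$. Peeling off the second class $B$, which also has size $\ell_1$, leaves the vacuous requirement $a(K_2\cup K_1)\ge 3-3=0$.

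The paper's own proof does not close this step either. It asserts that a missing cross edge, by strictly increasing one distance, forces $\partial_1^L(G)>\partial_1^L(H)$. That is unjustified: in this example $\DL(G)-\DL(H)=(e_y-e_z)(e_y-e_z)^T$, and this vanishes on the top eigenvectors of $\DL(H)$, which are zero-sum vectors supported on $A$ or $B$.

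What your argument does prove is the correct characterization. For $2\le\chi\le n-1$, equality $\partial_1^L(G)=n+\ceil{n/\chi}$ holds if and only if $\diam(G)\le 2$ and the Laplacian spectral radius of $\comp{G}$ equals $\ceil{n/\chi}$. The lemma as stated does survive when $\chi\mid n$: then every class has size $\ell_1$, your rigidity step applies to each class, and $G=H$.
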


\begin{proof}
	Let $\chi=\chi(G)$ and let $b_\chi=n+\ceil{\tfrac{n}{\chi}}.$ If  $V(G)=C_1\dot\cup C_2\dot\cup\cdots\dot\cup C_\chi$ is an optimal $\chi$-coloring with  $\ell_j=|C_j|,$ and $\ell_1\ge \ell_2\ge\cdots\ge \ell_\chi.$ 
	Since the $C_j$ form a partition of $V(G)$ into $\chi$ nonempty classes, the pigeonhole principle gives
	\begin{equation}\label{eq:equality-ell1-lower}
		\ell_1\ge \ceil{\frac{n}{\chi}}.
	\end{equation}
	 Let $H=K_{\ell_1,\dots,\ell_\chi}$ be the complete $\chi$-partite graph with parts $C_1,\dots,C_\chi$. Since each color class is independent in $G$, every edge of $G$ joins two different color classes. The graph $H$ contains all possible such cross-edges, and hence $E(G)\subseteq E(H).$ Thus $G$ is a spanning subgraph of $H$. By the edge-addition form of Theorem~\ref{thm:monotone}, adding missing cross-edges cannot increase the distance Laplacian eigenvalues. Therefore
	\begin{equation}\label{eq:equality-reduction}
		\partial_1^L(G)\ge \partial_1^L(H).
	\end{equation}
	On the other hand, Lemma~\ref{lem:multipartite-spectrum} gives
	 $\partial_1^L(H)=n+\ell_1.$ Combining this with \eqref{eq:equality-ell1-lower} and \eqref{eq:equality-reduction}, we obtain
	\[
	\partial_1^L(G)
	\ge
	\partial_1^L(H)
	=
	n+\ell_1
	\ge
	n+\ceil{\frac{n}{\chi}}.
	\]
	 Now suppose that equality holds. Then every inequality in the chain above must be an equality. In particular,
	 $n+\ell_1=n+\ceil{\frac{n}{\chi}},$ 
	and hence
	\begin{equation}\label{eq:equality-largest-class}
		\ell_1=\ceil{\frac{n}{\chi}}.
	\end{equation}
	Moreover, we note that $\partial_1^L(G)=\partial_1^L(H).$  Next, we show that this forces $G=H$. Suppose, to the contrary, that $G\ne H$. Then some cross-edge between two different color classes is missing from $G$. Equivalently, there exist vertices
	 $x\in C_a, $ and $y\in C_b,$ for $ a\ne b,$  such that $xy\notin E(G)$, while $xy\in E(H)$.
	 Adding this edge gives a connected graph $G'=G+xy$ on the same vertex set and with the same color partition. In the standard proof of the chromatic bound, equality in the comparison $\partial_1^L(G)\ge \partial_1^L(H)$ can occur only when no such cross-edge is missing. Indeed, adding a missing cross-edge strictly shortens at least the distance between its two endpoints and may shorten further distances as well. In the Rayleigh-quotient form, we have
	\[
	z^T\DL(G)z
	=
	\sum_{\{u,v\}\subseteq V(G)} d_G(u,v)(z_u-z_v)^2,
	\]
	this strict decrease in at least one distance is incompatible with equality in the extremal comparison unless every possible cross-edge is already present. Thus equality in $\partial_1^L(G)\ge \partial_1^L(H)$ forces
	$E(G)=E(H),$ and consequently, $G\cong K_{\ell_1,\dots,\ell_\chi}.$ Together with \eqref{eq:equality-largest-class}, this proves that equality in the chromatic bound implies that $G$ is a complete $\chi$-partite graph whose largest part has size
 	$\ceil{\frac{n}{\chi}}.$ 
	
	Conversely, suppose that $G\cong K_{\ell_1,\dots,\ell_\chi}$ and that $\ell_1=\ceil{\frac{n}{\chi}}.$ Then Lemma~\ref{lem:multipartite-spectrum} gives
 \begin{equation*}
		\partial_1^L(G)=n+\ell_1
	=
	n+\ceil{\frac{n}{\chi}}.
	\end{equation*}
	Thus such graphs attain equality. 	It remains only to justify the equivalent balanced formulation. Since
	\[
	\ell_1=\max_{1\le j\le\chi}\ell_j=\ceil{\frac{n}{\chi}},
	\]
	so no part can have size larger than $\ceil{n/\chi}$. If some part had size strictly smaller than
	 $\floor{\tfrac{n}{\chi}},$ 
	then, because the total sum of the $\chi$ part sizes is $n$, another part would have to exceed
	$\ceil{n/\chi}$, contradicting the maximality condition above. Hence every part has size either
	 $\floor{\tfrac{n}{\chi}}$ or $\ceil{\tfrac{n}{\chi}}.$ Therefore the parts are as balanced as possible. This completes the proof.
\end{proof}

\medskip
\noindent  The next theorem identifies the exact minimum of the distance Laplacian spectral radius among graphs with fixed order and chromatic number, and characterizes all extremal graphs.
\begin{theorem}\label{thm:min-spectral-radius}
Among all connected graphs $G$ on $n$ vertices with $\chi(G)=\chi$, where $2\le\chi\le n-1$, the minimum possible distance
Laplacian spectral radius is
\[
    \min_{\substack{G~\mathrm{connected}\\ |V(G)|=n,\,\chi(G)=\chi}} \partial_1^L(G)
    =n+\ceil{\frac{n}{\chi}}.
\]
Moreover, equality holds if and only if $G$ is a complete $\chi$-partite graph
$K_{\ell_1,\dots,\ell_\chi}$ whose largest part has size
\[
    \max_{1\le j\le \chi}\ell_j=\ceil{\frac{n}{\chi}}.
\]
Equivalently, the part sizes are balanced: each part has size $\floor{n/\chi}$ or $\ceil{n/\chi}$.
\end{theorem}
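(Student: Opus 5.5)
The plan is to derive Theorem~\ref{thm:min-spectral-radius} from the chromatic lower bound \eqref{eq:AH-bound} (equivalently the $i=1$ case of Theorem~\ref{thm:color-majorization}), an explicit construction attaining it, and the equality characterization of Lemma~\ref{lem:equality-AH}. There are three steps: a lower bound valid for every admissible graph, a matching construction, and the identification of all minimizers.

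\emph{Lower bound.} Let $G$ be connected on $n$ vertices with $\chi(G)=\chi$, where $2\le\chi\le n-1$. Since $\chi\le n-1$, $G\ncong K_n$. Fix an optimal coloring with largest class size $\ell_1$. Theorem~\ref{thm:color-majorization} with $i=1$ gives
\[
\partial_1^L(G)\ge n+\ell_1 .
\]
The pigeonhole bound \eqref{eq:ell1-pigeonhole} gives $\ell_1\ge\ceil{n/\chi}$. Together these yield $\partial_1^L(G)\ge b_\chi$ for every admissible $G$. Note that $\ell_1\ge 2$ here, so the block $(n+\ell_1)^{(\ell_1-1)}$ is nonempty and the $i=1$ case applies.

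\emph{Attainment.} Write $n=q\chi+r$ with $0\le r<\chi$. Take the balanced complete $\chi$-partite graph with $r$ parts of size $q+1$ and $\chi-r$ parts of size $q$. We check it is admissible:
\begin{itemize}[noitemsep]
\item it is connected, since $\chi\ge2$;
\item its chromatic number is exactly $\chi$, because one vertex from each part spans a $K_\chi$ and the parts give a proper $\chi$-coloring;
\item its largest part has size $\ceil{n/\chi}$.
\end{itemize}
By Lemma~\ref{lem:multipartite-spectrum}, $\partial_1^L=n+\ceil{n/\chi}$; this uses that the largest part has size at least $2$, which holds because $\chi\le n-1$. Hence the minimum exists and equals $b_\chi$.

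\emph{Extremal graphs.} The minimizers are exactly the admissible graphs with $\partial_1^L(G)=b_\chi$. Lemma~\ref{lem:equality-AH} says these are precisely the complete $\chi$-partite graphs whose largest part has size $\ceil{n/\chi}$. For the converse direction, any complete $\chi$-partite graph with that largest part automatically has chromatic number $\chi$ and is connected, so it lies in the admissible class. The balanced reformulation is the counting argument closing the proof of Lemma~\ref{lem:equality-AH}. If every part is at most $\ceil{n/\chi}$ and the $\chi$ parts sum to $n$, then no part can be below $\floor{n/\chi}$ when $\chi\nmid n$. When $\chi\mid n$, all parts are forced to equal $n/\chi$.

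\emph{Main obstacle.} The delicate point is the ``only if'' direction: showing that equality forces $G$ to equal its multipartite completion $H$. Lemma~\ref{lem:equality-AH} is taken as given, but its justification of strictness is informal. If I had to reinforce it, I would argue by contradiction. Suppose $G\neq H$ while $\partial_1^L(G)=n+\ell_1$. Take a unit eigenvector $z$ of $\DL(H)$ for $n+\ell_1$, supported on the largest class $C_1$ with zero sum there. If a cross-edge $xy$ with $x\in C_1$ is missing, then $d_G(x,y)\ge 2$. One can then pick $z$ in this eigenspace with $z_x\neq0$ (the eigenspace is all zero-sum vectors on $C_1$), and Lemma~\ref{lem:quadratic-form} gives
\[
z^T\DL(G)z > z^T\DL(H)z = n+\ell_1,
\]
so $\partial_1^L(G)>b_\chi$, a contradiction.

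Missing cross-edges avoiding the largest class need a separate argument. The cleanest route is to note that when $\ell_1=\ceil{n/\chi}$ the balance makes several classes maximal. I would then look for a maximal class incident to the missing edge, and otherwise exploit the fact that a missing edge between two classes raises transmissions so that the all-class test vector gives strict increase. I expect this to require a short case analysis; it is the step I would check most carefully.
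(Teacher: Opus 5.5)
Your route is the paper's route: a lower bound from the chromatic bound (Theorem~\ref{thm:color-majorization} with $i=1$ plus pigeonhole), attainment by the balanced complete $\chi$-partite graph, and minimizers via Lemma~\ref{lem:equality-AH} plus a counting argument. The minimum value and the ``if'' direction are correctly proved. The equality characterization, however, cannot be completed, because it is false, and both steps you rely on for it fail.

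First, your counting claim that no part can be below $\floor{n/\chi}$ when $\chi\nmid n$ is wrong; the paper makes the same claim. Take $n=7$, $\chi=3$: the graph $K_{3,3,1}$ has largest part $3=\ceil{7/3}$, so $\partial_1^L=10=n+\ceil{n/\chi}$ by Lemma~\ref{lem:multipartite-spectrum}. Yet it has a part of size $1<\floor{7/3}$. A cap of $\ceil{n/\chi}$ on every part only forces each part to have size at least $n-(\chi-1)\ceil{n/\chi}$. Unbalanced minimizers exist whenever $\floor{n/\chi}\ge 2$ and $1\le n-\chi\floor{n/\chi}\le \chi-2$.

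Second, you were right to distrust the strictness argument in Lemma~\ref{lem:equality-AH}. Your test-vector argument does show that a minimizer cannot miss a cross-edge at a class of size $\ceil{n/\chi}$. But the remaining case is not a case analysis waiting to be filled in; it admits a counterexample.
\begin{itemize}
\item Let $H=K_{3,3,2,1}$ with classes $A,B$ of size $3$, $C=\{c_1,c_2\}$ and $D=\{d\}$, and set $G=H-c_1d$.
\item Then $n=9$ and $\chi(G)=4$, since a vertex of $A$, a vertex of $B$, $c_2$ and $d$ span a $K_4$. Also $\ceil{9/4}=3$.
\item $G$ is not complete multipartite, since $\comp{G}$ contains the induced path $c_2c_1d$.
\item The only changed distance is $d_G(c_1,d)=2$, so $\DL(G)=\DL(H)+ww^T$ with $w=e_{c_1}-e_d=\tfrac{1}{2}u+v$. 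Here $u=e_{c_1}-e_{c_2}$ and $v=\tfrac{1}{2}\mathbf{1}_C-\mathbf{1}_D$ are eigenvectors of $\DL(H)$ for $11$ and $9$.
\item On the plane spanned by $u,v$ the eigenvalues become $12$ and $10$. One checks directly that $\DL(G)z=12z$ for $z=2e_{c_1}-e_{c_2}-e_d$ and $\DL(G)y=10y$ for $y=e_d-e_{c_2}$.
\item Nothing changes on the orthogonal complement of that plane, so $\operatorname{Spec}\DL(G)=(12^{(5)},10,9^{(2)},0)$ and $\partial_1^L(G)=12=n+\ceil{n/\chi}$.
\end{itemize}

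The mechanism is that a rank-one increase supported away from the largest classes can lift $n+\ell_a$ exactly to $n+\ell_1$ without passing it. This is precisely what the paper's remark, that ``a strict decrease in one distance is incompatible with equality'', overlooks.

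What is actually provable is the following. The minimum equals $n+\ceil{n/\chi}$, and every complete $\chi$-partite graph with largest part $\ceil{n/\chi}$ attains it, balanced or not. In every optimal coloring of a minimizer, the largest classes have size $\ceil{n/\chi}$ and are completely joined to all other classes. The ``only if'' and ``balanced'' clauses have to be dropped.
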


\begin{proof}
	Let $G$ be any connected graph of order $n$ with $\chi(G)=\chi$, where
	$2\le \chi\le n-1$. By the chromatic lower bound \eqref{eq:AH-bound}, we have
	\[
	\partial_1^L(G)\ge n+\ceil{\frac{n}{\chi}}.
	\]
	Thus $n+\ceil{\tfrac{n}{\chi}}$ is a lower bound for the distance Laplacian spectral radius over the whole class of connected graphs on $n$ vertices with chromatic number $\chi$. It remains to show that this lower bound is attainable. Let  $n=q\chi+r,$ for 0$\le r<\chi.$  Consider the balanced complete $\chi$-partite graph $T_\chi(n)=K_{\ell_1,\dots,\ell_\chi},$ where $r$ parts have size $q+1$ and the remaining $\chi-r$ parts have size $q$. This graph is connected because $\chi\ge2$, and its chromatic number is exactly $\chi$, since a complete $\chi$-partite graph with all parts nonempty requires one color for each part.
	 By Lemma~\ref{lem:multipartite-spectrum}, the largest distance Laplacian eigenvalue of a complete multipartite graph
	$K_{\ell_1,\dots,\ell_\chi}$ is
	\[
	\partial_1^L(K_{\ell_1,\dots,\ell_\chi})=n+\max_{1\le j\le\chi}\ell_j.
	\]
	For the balanced graph $T_\chi(n)$, the largest part has size $\max_j\ell_j=
	\ceil{\tfrac{n}{\chi}}.$ 
	Indeed, if $r=0$, then all parts have size $q=n/\chi$, while if $1\le r<\chi$, then the largest parts have size $q+1=\ceil{n/\chi}$. Therefore,
	\[
	\partial_1^L(T_\chi(n))
	=
	n+\ceil{\frac{n}{\chi}}.
	\]
	Hence the lower bound is attained, and consequently
	\[
	\min_{\substack{G~\mathrm{connected}\\ |V(G)|=n,\,\chi(G)=\chi}}
	\partial_1^L(G)
	=
	n+\ceil{\frac{n}{\chi}}.
	\]
	 We now characterize the equality cases. Suppose first that a connected graph $G$ with $\chi(G)=\chi$ satisfies
	\[
	\partial_1^L(G)=n+\ceil{\frac{n}{\chi}}.
	\]
	Then equality holds in the chromatic bound \eqref{eq:AH-bound}. By Lemma~\ref{lem:equality-AH}, this is possible only when $G$ is a complete $\chi$-partite graph $G\cong K_{\ell_1,\dots,\ell_\chi}$ whose largest part has size
	\[
	\max_{1\le j\le\chi}\ell_j=\ceil{\frac{n}{\chi}}.
	\]
	 Conversely, suppose that $G\cong K_{\ell_1,\dots,\ell_\chi}$ and that
	\[
	\max_{1\le j\le\chi}\ell_j=\ceil{\frac{n}{\chi}}.
	\]
	Then Lemma~\ref{lem:multipartite-spectrum} gives
	\[
	\partial_1^L(G)
	=
	n+\max_{1\le j\le\chi}\ell_j
	=
	n+\ceil{\frac{n}{\chi}},
	\]
	so such a graph attains the minimum. Finally, the condition on the largest part is equivalent to the usual balanced condition. Since the $\chi$ part sizes are positive integers whose sum is $n$, no part can exceed
	 $\ceil{\tfrac{n}{\chi}}.$ If some part had size smaller than $\floor{\tfrac{n}{\chi}},$ then, in order for the total sum of the $\chi$ part sizes to remain equal to $n$, another part would have to be larger than
	 $\ceil{\tfrac{n}{\chi}},$ which is impossible. Thus every part has size either
	 $\floor{\frac{n}{\chi}}$ or $\ceil{\frac{n}{\chi}}.$ Therefore the extremal graphs are precisely the balanced complete $\chi$-partite graphs.
\end{proof}

\medskip
\noindent The following consequence recovers Theorem 4.2 of \cite{PirzadaKhan2021}.

\begin{corollary}\label{cor:recover-4-2}
If $n/2\le \chi\le n-1$, then $\ceil{n/\chi}=2$, and Theorem \ref{thm:min-spectral-radius} gives
\[
    \partial_1^L(G)\ge n+2.
\]
Equality holds if and only if
\[
    G\cong K_{\underbrace{2,
\dots,2}_{n-\chi\text{ parts}},\underbrace{1,
\dots,1}_{2\chi-n\text{ parts}}}.
\]
In particular, the minimizer is unique up to isomorphism in this range.
\end{corollary}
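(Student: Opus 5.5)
The plan is to specialize Theorem~\ref{thm:min-spectral-radius} to the range $n/2\le\chi\le n-1$ and then identify the balanced multipartite graph explicitly.

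\textbf{Step 1: compute $\ceil{n/\chi}$.} Since $\chi\ge n/2$, we have $n/\chi\le 2$. Since $\chi\le n-1<n$, we have $n/\chi>1$. Hence $\ceil{n/\chi}=2$.

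\textbf{Step 2: the bound and equality case.} Theorem~\ref{thm:min-spectral-radius} now gives $\partial_1^L(G)\ge n+2$ for every connected $G$ of order $n$ with $\chi(G)=\chi$. Equality holds exactly when $G$ is a complete $\chi$-partite graph whose largest part has size $2$. Equivalently, every part has size $\floor{n/\chi}$ or $\ceil{n/\chi}$. Note that $\floor{n/\chi}=1$ except when $\chi=n/2$, where all parts equal $2$. In either case every part has size $1$ or $2$, and at least one part has size $2$ because $\chi<n$.

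\textbf{Step 3: count the parts.} Let $a$ be the number of parts of size $2$ and $b$ the number of parts of size $1$. Then
\[
a+b=\chi,\qquad 2a+b=n.
\]
Solving gives $a=n-\chi$ and $b=2\chi-n$. Both are nonnegative precisely because $n/2\le\chi\le n-1$, and $a\ge1$. So the part multiset is uniquely determined, and the extremal graph is unique up to isomorphism:
\[
G\cong K_{\underbrace{2,\dots,2}_{n-\chi},\underbrace{1,\dots,1}_{2\chi-n}}.
\]
Conversely, this graph has chromatic number $\chi$ and largest part $2$, so by Lemma~\ref{lem:multipartite-spectrum} we get $\partial_1^L=n+2$.

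There is no real obstacle here. The only step needing care is verifying $\ceil{n/\chi}=2$ at the endpoints $\chi=n/2$ and $\chi=n-1$, and checking that the counts $n-\chi$ and $2\chi-n$ are valid, which the range hypothesis guarantees.
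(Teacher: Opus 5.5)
Your argument follows the paper's proof almost line for line. You get $\ceil{n/\chi}=2$ from $1<n/\chi\le 2$, invoke Theorem~\ref{thm:min-spectral-radius} for both the bound and the equality case, and fix the part multiset by solving $a+b=\chi$, $2a+b=n$. Your checks that $a=n-\chi\ge 1$ and $b=2\chi-n\ge 0$ are slightly more careful than the paper's.

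The weak step, which the paper shares, is the \emph{only if} direction of the equality case you import from Theorem~\ref{thm:min-spectral-radius}, i.e.\ from Lemma~\ref{lem:equality-AH}. That direction is false in general. If $\diam(G)=2$, then $\DL(G)=L(G)+2L(\comp{G})=nI-J+L(\comp{G})$ with $J$ the all-ones matrix, so $\partial_1^L(G)=n+\lambda_1(L(\comp{G}))$.
\begin{itemize}
\item For $G=\comp{K_3\cup K_3\cup P_3}$ this gives $n=9$, $\chi(G)=4$ and $\partial_1^L(G)=9+3=12=n+\ceil{n/\chi}$. Yet $G$ is not complete multipartite.
\item The strictness assertion in the proof of Lemma~\ref{lem:equality-AH} fails because a top eigenvector can vanish on both endpoints of a missing cross-edge. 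Here $e_a-e_{a'}$, for two vertices $a,a'$ of one $K_3$ of $\comp{G}$, is such an eigenvector.
\item Similarly, $K_{3,3,2,1}$ attains the bound without being balanced. So the ``equivalently balanced'' clause is only valid in your range because of the direct check you gave for $\ceil{n/\chi}=2$.
\end{itemize}

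In your range the corollary is still true, and you can prove the equality case directly.
\begin{enumerate}
\item Testing $\DL(G)$ on $e_v-\frac1n\mathbf{1}$ gives $\partial_1^L(G)\ge\frac{n}{n-1}\Tr_G(v)$.
\item A vertex of eccentricity at least $3$ has $|N_G(v)|\le n-3$, hence $\Tr_G(v)\ge 2n-1-|N_G(v)|\ge n+2$. So $\partial_1^L(G)=n+2$ forces $\diam(G)=2$ (note $G\ne K_n$).
\item Then $\lambda_1(L(\comp{G}))=2$. A connected graph on at least $3$ vertices has Laplacian spectral radius at least its maximum degree plus one, hence at least $3$. So every component of $\comp{G}$ is $K_1$ or $K_2$.
\item Thus $G$ is complete multipartite with parts of size at most $2$, and its chromatic number is its number of parts. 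Your count $a=n-\chi$, $b=2\chi-n$ then completes the proof.
\end{enumerate}
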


\begin{proof}
If $n/2\le\chi\le n-1$, then $1<n/\chi\le2$, so $\ceil{n/\chi}=2$. Theorem
\ref{thm:min-spectral-radius} gives the lower bound $\partial_1^L(G)\ge n+2$, with equality precisely for complete
$\chi$-partite graphs whose largest part has size $2$. Let $a$ be the number of parts of size $2$. Since all parts have size
$1$ or $2$,
\[
    n=2a+1(\chi-a)=a+\chi,
\]
so $a=n-\chi$. Thus there are $n-\chi$ parts of size $2$ and $2\chi-n$ parts of size $1$. The multiset of part sizes is fixed,
and complete multipartite graphs are determined up to isomorphism by this multiset.
\end{proof}

\medskip
\noindent  Aouchiche and Hansen \cite{AouchicheHansen2017} proved that, for $G\ne K_n$, the number of distance Laplacian eigenvalues below
$b_\chi$ satisfies $\mu_G([0,b_\chi))\le n-1$, and that if $\diam(G)\ge3$, then
$\mu_G([0,b_\chi))\le n-2$. Combining these results with Corollary \ref{cor:many-above-bchi} gives a unified refinement.

\begin{theorem}\label{thm:diameter-refine}
Let $G$ be connected on $n\ge5$ vertices with chromatic number $\chi\le n-1$, and let
$b_\chi=n+\ceil{n/\chi}$. Then the following hold.
\begin{enumerate}[label=\textup{(\alph*)}]
    \item
    \[
        \mu_G([0,b_\chi))\le n-\ceil{\frac{n}{\chi}}+1.
    \]
    Equivalently,
    \[
        \mu_G([b_\chi,\partial_1^L(G)])\ge \ceil{\frac{n}{\chi}}-1.
    \]
    \item If $\diam(G)\ge3$, then
    \[
        \mu_G([0,b_\chi))\le n-\max\left\{2,\ceil{\frac{n}{\chi}}-1\right\}.
    \]
    Equivalently,
    \[
        \mu_G([b_\chi,\partial_1^L(G)])
        \ge \max\left\{2,\ceil{\frac{n}{\chi}}-1\right\}.
    \]
\end{enumerate}
\end{theorem}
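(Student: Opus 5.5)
Part (a) is Corollary~\ref{cor:many-above-bchi} restated, so the plan is to invoke it directly. By Theorem~\ref{thm:color-majorization}, the first $\ell_1-1$ eigenvalues are at least $n+\ell_1$. The pigeonhole bound \eqref{eq:ell1-pigeonhole}, $\ell_1\ge\ceil{n/\chi}$, then shows these eigenvalues are at least $b_\chi$. Hence $\mu_G([b_\chi,\partial_1^L(G)])\ge\ceil{n/\chi}-1$. The equivalent form for $\mu_G([0,b_\chi))$ follows from the counting identity \eqref{eq:count-identity}. The hypothesis $\chi\le n-1$ guarantees $G\ne K_n$ and $\ceil{n/\chi}\ge2$, so the bound is meaningful.

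For part (b), I will combine two independent lower bounds on $\mu_G([b_\chi,\partial_1^L(G)])$. The first is the bound $\ceil{n/\chi}-1$ from part (a). The second is the Aouchiche--Hansen result from \cite{AouchicheHansen2017}, recalled just before the statement: if $\diam(G)\ge3$, then $\mu_G([0,b_\chi))\le n-2$. Via \eqref{eq:count-identity}, this says $\mu_G([b_\chi,\partial_1^L(G)])\ge2$. Since both bounds hold simultaneously, the count is at least their maximum. Translating back through \eqref{eq:count-identity} gives $\mu_G([0,b_\chi))\le n-\max\{2,\ceil{n/\chi}-1\}$.

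The only real obstacle is whether the Aouchiche--Hansen diameter statement may be cited as stated, with the same threshold $b_\chi$ and under hypotheses covered by $n\ge5$ and $\chi\le n-1$. The paper treats it as a known result, so I will cite it. If a self-contained argument were needed, I would try the following. When $\diam(G)\ge3$, there are vertices $u,v$ at distance at least $3$, so that no vertex is adjacent to both. I would then exhibit a two-dimensional subspace orthogonal to $\mathbf 1$ on which the quadratic form of Lemma~\ref{lem:quadratic-form} is at least $b_\chi\|x\|^2$. Courant--Fischer would then give $\partial_2^L(G)\ge b_\chi$. A natural first try is vectors supported near $u$ and near $v$, using that their transmissions are large; this step is where genuine work would lie. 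For the proposal itself, citing \cite{AouchicheHansen2017} and taking the maximum suffices.
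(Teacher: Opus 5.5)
Your proposal is correct and is essentially the paper's proof: part (a) is Corollary~\ref{cor:many-above-bchi} read through the counting identity. Part (b) cites Theorem~2.7 of \cite{AouchicheHansen2017} for the bound $2$ and takes the maximum with the bound from (a), so your Courant--Fischer fallback is never needed.

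As an aside, even that citation can be avoided: if $\chi\le n-2$, the block form of Theorem~\ref{thm:color-majorization} already gives $\partial_2^L(G)\ge b_\chi$ (via $\ell_1\ge 3$ when $\ceil{n/\chi}\ge 3$, and via $s_p=n-\chi\ge 2$ when $\ceil{n/\chi}=2$). If instead $\chi=n-1$, the singleton classes of an optimal coloring form a clique, and the two vertices $u,w$ of the remaining class must have a common neighbor (otherwise $u$ could take the color of a neighbor of $w$ and $w$ that of a neighbor of $u$, giving an $(n-2)$-coloring), so $\diam(G)\le 2$ and (b) is vacuous.
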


\begin{proof}
	As the distance Laplacian eigenvalues of $G$ are ordered, and every eigenvalue of $\DL(G)$ lies either below $b_\chi$ or in the interval $[b_\chi,\partial_1^L(G)]$. Hence
	\begin{equation}\label{eq:diam-proof-count}
		\mu_G([0,b_\chi))+
		\mu_G([b_\chi,\partial_1^L(G)])=n.
	\end{equation}
	\noindent\textup{(a)}
	By Corollary~\ref{cor:many-above-bchi}, at least $\ceil{\frac{n}{\chi}}-1$ distance Laplacian eigenvalues of $G$ are not smaller than $b_\chi$. Equivalently,
	\[
	\mu_G([b_\chi,\partial_1^L(G)])
	\ge
	\ceil{\frac{n}{\chi}}-1.
	\]
	Substituting this inequality into \eqref{eq:diam-proof-count}, we get
	\[
	\mu_G([0,b_\chi))
	=
	n-\mu_G([b_\chi,\partial_1^L(G)])
	\le
	n-\left(\ceil{\frac{n}{\chi}}-1\right).
	\]
	Therefore
	\[
	\mu_G([0,b_\chi))
	\le
	n-\ceil{\frac{n}{\chi}}+1.
	\]
	This proves the first assertion, and the lower bound for
	$\mu_G([b_\chi,\partial_1^L(G)])$ is precisely its equivalent form under the counting identity
	\eqref{eq:diam-proof-count}.
	
	\noindent \textup{(b)}
	Now assume that $\diam(G)\ge3.$ By Theorem~2.7 of Aouchiche and Hansen~\cite{AouchicheHansen2017}, the diameter condition forces
	\[
	\mu_G([0,b_\chi))\le n-2.
	\]
	Using \eqref{eq:diam-proof-count}, this is equivalent to
	\[
	\mu_G([b_\chi,\partial_1^L(G)])\ge2.
	\]
	On the other hand, part \textup{(a)}, or equivalently Corollary~\ref{cor:many-above-bchi}, gives the independent color-class estimate
	\[
	\mu_G([b_\chi,\partial_1^L(G)])
	\ge
	\ceil{\frac{n}{\chi}}-1.
	\]
	Thus the number of eigenvalues lying in
	$[b_\chi,\partial_1^L(G)]$ is bounded below by both quantities
	 $2$ and $\ceil{\tfrac{n}{\chi}}-1.$ 
	Consequently, we have
	\[
	\mu_G([b_\chi,\partial_1^L(G)])
	\ge
	\max\left\{2,\ceil{\frac{n}{\chi}}-1\right\}.
	\]
	Applying the counting identity \eqref{eq:diam-proof-count}, we have
	\[
	\mu_G([0,b_\chi))
	=
	n-\mu_G([b_\chi,\partial_1^L(G)])
	\le
	n-\max\left\{2,\ceil{\frac{n}{\chi}}-1\right\}.
	\]
	This proves the second assertion.
\end{proof}

\medskip
\noindent  Part (a) of Theorem \ref{thm:diameter-refine} refines Theorem 2.6 of \cite{AouchicheHansen2017}, and part (b) refines Theorem
2.7 of \cite{AouchicheHansen2017}. The improvement is most visible when $\chi$ is small, because then
$\ceil{n/\chi}-1$ is large.

\medskip

\noindent  We provide representative computations, rounded to three decimal places where necessary, verifying sharpness of the bounds and
the strengthened multiplicity claims. For complete multipartite graphs, the spectra follow from Lemma
\ref{lem:multipartite-spectrum}. For the remaining graphs, the eigenvalues were computed from
$\DL(G)=\operatorname{diag}(\Tr_G(v_1),\dots,\Tr_G(v_n))-D(G)$ and then sorted in nonincreasing order. In each row,
$m_{\ge b_\chi}$ denotes the number of eigenvalues $\partial_i^L(G)$ satisfying $\partial_i^L(G)\ge b_\chi$. Here
$S_{6,2}$ denotes the double star on $8$ vertices whose two centers have degrees $6$ and $2$; its complement
$\comp{S_{6,2}}$ has diameter $3$ and chromatic number $6$. For $K_{4,4,2}$ and $K_{2,2,1,1,1}$, the exact spectra follow
from Lemma \ref{lem:multipartite-spectrum}.

\begin{table}[H]
    \centering
    \small
    \begin{tabular}{@{}lcccccc@{}}
        \toprule
        Graph $G$ & $n$ & $\chi$ & $\diam(G)$ & $b_\chi$ & $m_{\ge b_\chi}$ & First six $\partial_i^L(G)$ \\
        \midrule
        $K_{2,2,1,1,1}$ & 7 & 5 & 2 & 9 & 2 &
        $9,\,9,\,7,\,7,\,7,\,7$ \\
        $\comp{S_{6,2}}$ & 8 & 6 & 3 & 10 & 2 &
        $16.429,\,10.922,\,9,\,9,\,9,\,9$ \\
        $P_8$ & 8 & 2 & 7 & 12 & 7 &
        $38.446,\,28,\,25.016,\,22,\,19.787,\,18$ \\
        $K_{4,4,2}$ & 10 & 3 & 2 & 14 & 6 &
        $14,\,14,\,14,\,14,\,14,\,14$ \\
        \bottomrule
    \end{tabular}
    \caption{Numerical verification of Theorems \ref{thm:color-majorization} and \ref{thm:diameter-refine}.}
    \label{table 1}
\end{table}

\noindent  The examples in Table \ref{table 1} may be read as follows.
\begin{enumerate}[noitemsep]
    \item $K_{2,2,1,1,1}$ shows that the basic lower bound can be attained sharply at the threshold. Here
    $\ceil{7/5}-1=1$, while two parts of size $2$ give two eigenvalues equal to $b_\chi=9$.
    \item $\comp{S_{6,2}}$ illustrates the diameter phenomenon in Theorem \ref{thm:diameter-refine}(b): although
    $\ceil{8/6}-1=1$, the condition $\diam(G)\ge3$ forces at least two eigenvalues to be at least $b_\chi$.
    \item $P_8$ shows that for small $\chi$ the majorization bound is strong. Here $\ceil{8/2}-1=3$, but in fact seven
    eigenvalues exceed $b_\chi=12$.
    \item $K_{4,4,2}$ exhibits the block structure predicted by Theorem \ref{thm:color-majorization}: the largest part size is
    $4$, so at least $3$ eigenvalues are $n+4=14$, and in fact there are $6$ such eigenvalues because two parts have size $4$.
\end{enumerate}

\begin{figure}[H]
    \centering
    \begin{tikzpicture}[scale=1, every node/.style={circle, draw, inner sep=1.5pt}]
        \node (a1) at (0,1) {};
        \node (a2) at (0,0) {};
        \node (b1) at (1.2,1.3) {};
        \node (b2) at (1.2,-0.3) {};
        \node (c1) at (2.4,1) {};
        \node (d1) at (2.7,0.5) {};
        \node (e1) at (2.4,0) {};
        \foreach \u in {a1,a2}{
            \foreach \v in {b1,b2,c1,d1,e1}{
                \draw (\u) -- (\v);
            }
        }
        \foreach \u in {b1,b2}{
            \foreach \v in {c1,d1,e1}{
                \draw (\u) -- (\v);
            }
        }
        \foreach \u/\v in {c1/d1,c1/e1,d1/e1}{
            \draw (\u) -- (\v);
        }
        \node[draw=none, rectangle, inner sep=0pt] at (1.2,-0.8) {$K_{2,2,1,1,1}$};
    \end{tikzpicture}
    \hspace{1cm}
    \begin{tikzpicture}[scale=1, every node/.style={circle, draw, inner sep=1.5pt}]
        \node (u) at (0,0) {};
        \node (v) at (1.6,0) {};
        \draw (u)--(v);
        \foreach \i/\y in {1/1.2,2/0.6,3/0,4/-0.6,5/-1.2}{
            \node (lu\i) at (-1.2,\y) {};
            \draw (u)--(lu\i);
        }
        \node (lv) at (2.8,0) {};
        \draw (v)--(lv);
        \node[draw=none, rectangle, inner sep=0pt] at (0.8,-1.8) {$S_{6,2}$};
    \end{tikzpicture}
    \hspace{1cm}
    \begin{tikzpicture}[scale=1, every node/.style={circle, draw, inner sep=1.5pt}]
        \node (x) at (0,0) {};
        \node (y) at (1.4,0) {};
        \draw (x)--(y);
        \foreach \i/\pos in {1/1.2,2/0.4,3/-0.4,4/-1.2}{
            \node (u\i) at (0.7,\pos) {};
            \draw (u\i)--(x);
            \draw (u\i)--(y);
        }
        \node (z) at (-1.0,0) {};
        \draw (z)--(x);
        \node[draw=none, rectangle, inner sep=0pt] at (0.2,-1.8) {$G_{\mathrm{ind}}$};
    \end{tikzpicture}
    \hspace{1cm}
    \begin{tikzpicture}[scale=1, every node/.style={circle, draw, inner sep=1.5pt}]
        \node (a) at (0,0.8) {};
        \node (b) at (0,-0.3) {};
        \node (c) at (0.8,1.3) {};
        \draw (a)--(b)--(c)--(a);
        \node (p) at (1.6,0.8) {};
        \node (q) at (1.6,-0.3) {};
        \foreach \u in {a,b,c}{
            \draw (\u)--(p);
            \draw (\u)--(q);
        }
        \draw (p)--(q);
        \node (t1) at (2.7,0.8) {};
        \node (t2) at (2.7,-0.3) {};
        \node (t3) at (1.9,-0.3) {};
        \draw (p)--(t1)--(t2)--(t3);
        \node[draw=none, rectangle, inner sep=0pt] at (2.0,-1.0) {$G_{\mathrm{clq}}$};
    \end{tikzpicture}
    \caption{Graphs used in Tables \ref{table 1} and \ref{table 2}.}
    \label{fig 1}
\end{figure}
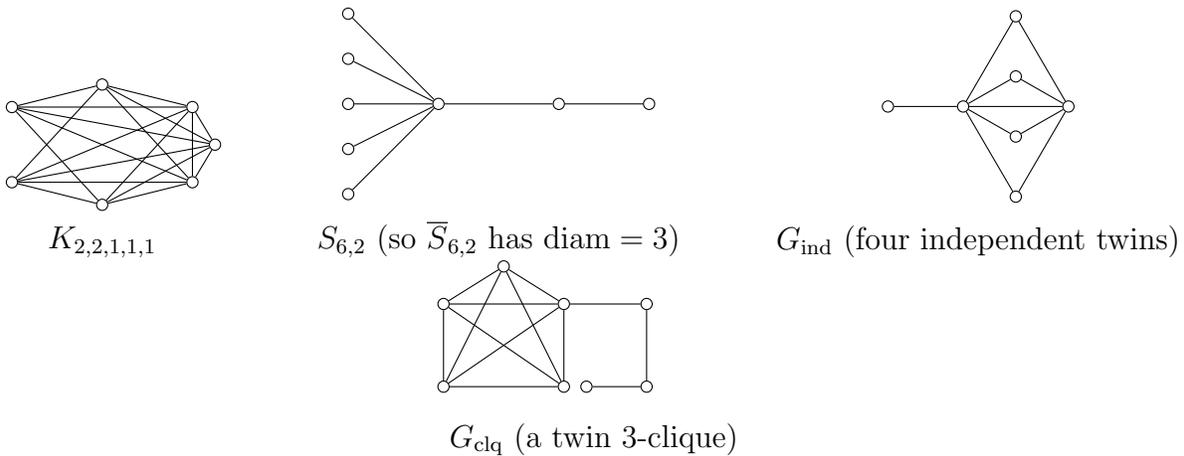

\noindent  The next table gives examples tailored to the extremal theorem, the diameter refinement, and the twin-structure mechanisms.

\begin{table}[H]
    \centering
    \small
    \begin{tabular}{@{}lcccccc@{}}
        \toprule
        Graph $G$ & $n$ & $\chi$ & $\diam(G)$ & $b_\chi$ & $m_{\ge b_\chi}$ & First six $\partial_i^L(G)$ \\
        \midrule
        $K_{3,3,3,2}$ & 11 & 4 & 2 & 14 & 6 &
        $14,\,14,\,14,\,14,\,14,\,14$ \\
        $C_{10}$ & 10 & 2 & 5 & 15 & 9 &
        $35.472,\,35.472,\,26.528,\,26.528,\,26,\,25$ \\
        $K_{3,5}$ & 8 & 2 & 2 & 12 & 4 &
        $13,\,13,\,13,\,13,\,11,\,11$ \\
        $G_{\mathrm{ind}}$ & 7 & 3 & 2 & 10 & 4 &
        $13,\,12,\,12,\,12,\,8,\,7$ \\
        $G_{\mathrm{clq}}$ & 8 & 5 & 4 & 10 & 7 &
        $26.610,\,18.512,\,14,\,14,\,14,\,13.808$ \\
        \bottomrule
    \end{tabular}
    \caption{Further examples supporting the refined theorems.}
    \label{table 2}
\end{table}

\noindent In Table \ref{table 2}, the graph $K_{3,3,3,2}$ attains $\partial_1^L=b_\chi$, illustrating Theorem
\ref{thm:min-spectral-radius}. The cycle $C_{10}$ illustrates Theorem \ref{thm:diameter-refine}(b) for graphs with diameter at
least $3$. The graphs $G_{\mathrm{ind}}$ and $G_{\mathrm{clq}}$ are twin-structure examples:
$G_{\mathrm{ind}}$ contains four independent twins with common neighborhood of size $2$ and exhibits the forced eigenvalue
$\Tr_G(v)+2=12$ with multiplicity $3$; $G_{\mathrm{clq}}$ contains a $3$-clique of twins with
$\Tr_G(v)+1=14$ appearing with multiplicity at least $2$.

\noindent The concrete descriptions of the twin examples are as follows.
\begin{enumerate}[noitemsep]
    \item $G_{\mathrm{ind}}$ is obtained by taking an edge on vertices $\{x,y\}$ and adding four vertices
    $u_1,\dots,u_4$, each adjacent to both $x$ and $y$. Thus $N(u_i)=\{x,y\}$ for all $i$. Finally, add one vertex $z$
    adjacent to $x$ only. Then $u_1,\dots,u_4$ are independent twins and $12$ appears in the distance Laplacian spectrum with
    multiplicity $3$.
    \item $G_{\mathrm{clq}}$ is obtained from a triangle $H=\{a,b,c\}$ by adding two vertices $p,q$ adjacent to all vertices of
    $H$ and to each other, and then attaching a path of length $3$ from $p$. The vertices $a,b,c$ form a twin clique with
    $\Tr_G(a)=\Tr_G(b)=\Tr_G(c)=13$, and hence $\Tr_G(a)+1=14$ occurs with multiplicity at least $2$.
\end{enumerate}

\section{Additional consequences of color-class domination}\label{section-new}

In this section, we collect some further consequences of the color-class domination principle. These consequences are particularly useful when the complete color-class profile of an optimal coloring is known, rather than only the largest color class.

\medskip
\noindent The next theorem counts how many large color classes force eigenvalues above an arbitrary threshold $n+t$.
\begin{theorem}\label{thm:multi-threshold}
	Let $G$ be connected with chromatic number $\chi$, and let
	$\ell_1\ge\ell_2\ge\cdots\ge\ell_\chi$ be the color-class sizes in a fixed optimal $\chi$-coloring of $G$. For an integer $t\ge2$, let
	$A_t=\sum_{\substack{1\le j\le\chi\\ \ell_j\ge t}}(\ell_j-1).$ 
	Then $\mu_G([n+t,\partial^{L}_1(G)])\ge A_t$ or equivalently, whenever $A_t>0$, then
	 $\partial^{L}_{A_t}(G)\ge n+t.$ 
\end{theorem}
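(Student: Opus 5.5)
The proof will be a direct corollary of Theorem~\ref{thm:color-majorization}, obtained by reading off a prefix of the block structure.

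First we locate the indices that contribute to $A_t$. Since $\ell_1\ge\cdots\ge\ell_\chi$, the set $\{j:\ell_j\ge t\}$ is an initial segment $\{1,\dots,q\}$, where $q=|\{j:\ell_j\ge t\}|$. Because $t\ge2$, every such $j$ has $\ell_j\ge2$, so $q\le p$ with $p$ as in Theorem~\ref{thm:color-majorization}. With the partial sums $s_j=\sum_{r=1}^{j}(\ell_r-1)$ defined there, this gives $A_t=s_q$. If $q=0$, then $A_t=0$ and the claim $\mu_G([n+t,\partial^L_1(G)])\ge0$ is trivial, so we may assume $q\ge1$.

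Now apply the block statement of Theorem~\ref{thm:color-majorization}. For each $1\le j\le q$ and each $i$ with $s_{j-1}+1\le i\le s_j$, it gives $\partial^L_i(G)\ge n+\ell_j\ge n+t$. These index ranges for $j=1,\dots,q$ tile $\{1,\dots,s_q\}=\{1,\dots,A_t\}$. Hence every one of $\partial^L_1(G),\dots,\partial^L_{A_t}(G)$ is at least $n+t$. These eigenvalues are trivially at most $\partial^L_1(G)$, so they all lie in $[n+t,\partial^L_1(G)]$. Counting with multiplicity gives $\mu_G([n+t,\partial^L_1(G)])\ge A_t$.

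For the equivalence, the eigenvalues are ordered nonincreasingly. Having at least $A_t$ of them at least $n+t$ is therefore the same as $\partial^L_{A_t}(G)\ge n+t$ whenever $A_t\ge1$: one direction uses monotonicity of the ordered list, and the other counts the indices $1,\dots,A_t$.

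There is no real obstacle here. The only point needing care is checking that the color classes with $\ell_j\ge t$ form a prefix of the ordering, so that the $s_j$ blocks telescope exactly to $A_t$. This prefix property holds because the $\ell_j$ are nonincreasing, and $t\ge2$ rules out the degenerate classes with $\ell_j=1$, which have empty blocks.
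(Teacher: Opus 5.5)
Your proposal is correct and follows essentially the same route as the paper. Both observe that the classes with $\ell_j\ge t$ form a prefix $C_1,\dots,C_q$ with $q\le p$, so $A_t=s_q$, and then read off $\partial^L_i(G)\ge n+\ell_j\ge n+t$ for $1\le i\le A_t$ from the block statement of Theorem~\ref{thm:color-majorization}. You additionally spell out the trivial case $A_t=0$ and the equivalence with $\partial^L_{A_t}(G)\ge n+t$, which the paper leaves implicit.
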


\begin{proof}
	Let $q=\max\{j:\ell_j\ge t\},$ provided that the set is nonempty. Since the sequence
	$\ell_1,\dots,\ell_\chi$ is nonincreasing, the color classes of size at least
	$t$ are exactly $C_1,C_2,\dots,C_q.$ Thus $A_t=\sum_{j=1}^{q}(\ell_j-1).$ Now, by Theorem~\ref{thm:color-majorization}, if
 	$s_j=\sum_{h=1}^{j}(\ell_h-1),$ then for every $1\le j\le p$, where $p$ is the number of color classes of size at least $2$, we have
	 $\partial_i^{L}(G)\ge n+\ell_j$  whenever $ s_{j-1}+1\le i\le s_j.$ Since $t\ge2$, every class counted in $A_t$ is among the first $p$ classes. Therefore, for all indices $1\le i\le A_t=s_q,$ the corresponding lower bound is at least
	 $n+\ell_q\ge n+t.$ 
	Hence, we have  $\partial_i^{L}(G)\ge n+t$ for  $1\le i\le A_t.$ It follows that at least $A_t$ distance Laplacian eigenvalues lie in the interval $[n+t,\partial^{L}_1(G)]$, that is,
	\[
	\mu_G([n+t,\partial^{L}_1(G)])\ge A_t.
	\]
\end{proof}

\noindent Taking $t=\ceil{n/\chi}$ gives a refinement of Corollary~\ref{cor:many-above-bchi} whenever several color classes have size at least the average ceiling.

\begin{corollary} \label{cor:refined-threshold-count}
	With the notation of Theorem~\ref{thm:multi-threshold},
	\[
	\mu_G([b_\chi,\partial^{L}_1(G)])
	\ge
	\sum_{\substack{1\le j\le\chi\\ \ell_j\ge \ceil{n/\chi}}}(\ell_j-1)
	\ge
	\ell_1-1
	\ge
	\ceil{\frac{n}{\chi}}-1.
	\]
\end{corollary}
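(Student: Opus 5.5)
The plan is to specialize Theorem~\ref{thm:multi-threshold} to $t=\ceil{n/\chi}$ and then compare the resulting sum with its largest term. The first inequality is exactly the conclusion of Theorem~\ref{thm:multi-threshold} in this case, because $n+t=b_\chi$.

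The hypothesis $t\ge2$ of that theorem needs checking. It holds when $\chi\le n-1$, since then $n/\chi>1$. In the degenerate case $\ceil{n/\chi}=1$ we have $\chi=n$, so every class has size $1$ and every summand $\ell_j-1$ is zero. The sum is then $0$, and so are the two lower terms $\ell_1-1$ and $\ceil{n/\chi}-1$. Hence the chain holds trivially there, and I would treat this case separately in one line.

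For the second inequality, I would use the pigeonhole bound \eqref{eq:ell1-pigeonhole}, namely $\ell_1\ge\ceil{n/\chi}$. This bound shows that the index $j=1$ belongs to the index set $\{j:\ell_j\ge\ceil{n/\chi}\}$. Every summand $\ell_j-1$ is nonnegative, so dropping all the other indices can only decrease the sum, which leaves exactly $\ell_1-1$.

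The third inequality, $\ell_1-1\ge\ceil{n/\chi}-1$, is again \eqref{eq:ell1-pigeonhole}. There is no real obstacle. The only care needed is the boundary case $\ceil{n/\chi}=1$ and the remark that $n+\ceil{n/\chi}$ coincides with $b_\chi$, so that the interval in Theorem~\ref{thm:multi-threshold} is the one appearing in the statement.
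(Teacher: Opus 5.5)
Your proposal is correct and follows the paper's proof. You specialize Theorem~\ref{thm:multi-threshold} to $t=\ceil{n/\chi}$ so that $n+t=b_\chi$, and then use $\ell_1\ge\ceil{n/\chi}$ both to see that the $j=1$ term appears in the sum and to get the last inequality; your check that $t\ge2$ when $\chi\le n-1$, and your one-line treatment of the degenerate case $\chi=n$ where every term vanishes, is care the paper's proof omits.
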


\begin{proof}
	If  $t=\ceil{\frac{n}{\chi}},$ then  $b_\chi=n+\ceil{\frac{n}{\chi}}=n+t.$ The first inequality follows directly from Theorem~\ref{thm:multi-threshold}. Since the largest color class satisfies
	 $\ell_1\ge\ceil{\frac{n}{\chi}},$ so the summation on the right contains at least the term $\ell_1-1$. Hence
	\[
	\sum_{\substack{1\le j\le\chi\\ \ell_j\ge \ceil{n/\chi}}}(\ell_j-1)
	\ge \ell_1-1.
	\]
	Finally, the pigeonhole principle gives $\ell_1-1\ge \ceil{\frac{n}{\chi}}-1.$ Combining these inequalities proves the result.
\end{proof}

\noindent For balanced complete multipartite graphs, the preceding count can be computed exactly.
\begin{corollary}\label{cor:balanced-count}
	Let $T_\chi(n)$ denote the balanced complete $\chi$-partite graph on $n$ vertices, and let
	 $n=q\chi+r,$ for  $0\le r<\chi.$ Then
	\[
	\mu_{T_\chi(n)}([b_\chi,\partial^{L}_1(T_\chi(n))])=
	\begin{cases}
		n-\chi, & r=0,\\[1mm]
		rq, & 1\le r<\chi.
	\end{cases}
	\]
\end{corollary}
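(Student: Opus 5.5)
The plan is to read off the spectrum of $T_\chi(n)$ from Lemma~\ref{lem:multipartite-spectrum} and then count the eigenvalues that are at least $b_\chi$. Write $n=q\chi+r$ with $0\le r<\chi$; since $\chi\le n$, we have $q\ge1$. By definition, $T_\chi(n)$ has $r$ parts of size $q+1$ and $\chi-r$ parts of size $q$. Lemma~\ref{lem:multipartite-spectrum} then gives the distance Laplacian spectrum
\[
\Big((n+q+1)^{(rq)},\;(n+q)^{((\chi-r)(q-1))},\;n^{(\chi-1)},\;0\Big).
\]
Parts of size $1$, which occur only when $q=1$, contribute multiplicity $q-1=0$ and cause no trouble. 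A sanity check is that the multiplicities sum to $rq+(\chi-r)(q-1)+\chi-1+1=q\chi+r=n$. Since the spectral radius is the largest listed value, the interval $[b_\chi,\partial_1^L(T_\chi(n))]$ contains exactly the listed eigenvalues that are $\ge b_\chi$. It then remains only to compare each distinct eigenvalue with $b_\chi=n+\ceil{n/\chi}$.

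\emph{Case $r=0$.} Here $\ceil{n/\chi}=q$, so $b_\chi=n+q$. The block $(n+q+1)^{(rq)}$ is empty, while $n<n+q$ and $0<b_\chi$. Hence the eigenvalues $\ge b_\chi$ are exactly the $n+q$ block. Its multiplicity is $\chi(q-1)=q\chi-\chi=n-\chi$, which is the claimed count. In the degenerate case $q=1$ (that is, $T_\chi(n)=K_n$) this multiplicity is $0=n-\chi$, so the formula still holds; this edge case should be noted explicitly.

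\emph{Case $1\le r<\chi$.} Now $\ceil{n/\chi}=q+1$, so $b_\chi=n+q+1$. The eigenvalues $n+q$, $n$ and $0$ all lie strictly below $b_\chi$, while the block $n+q+1$ (nonempty since $q+1\ge2$) equals $b_\chi$. The count is therefore exactly its multiplicity $r\big((q+1)-1\big)=rq$.

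There is no real obstacle here. The only points needing care are the bookkeeping of which part sizes contribute, in particular the size-$1$ parts when $q=1$, and checking that in each case the next-lower eigenvalue is \emph{strictly} below $b_\chi$, so that the count is an equality rather than a lower bound. As a consistency remark, I would also note that these values match the sum $\sum_{\ell_j\ge\ceil{n/\chi}}(\ell_j-1)$ of Corollary~\ref{cor:refined-threshold-count}, showing that bound is attained for balanced complete multipartite graphs.
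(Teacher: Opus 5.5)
Your proposal is correct and is essentially the paper's own proof: you read off the spectrum of $T_\chi(n)$ from Lemma~\ref{lem:multipartite-spectrum} and, in each of the cases $r=0$ and $1\le r<\chi$, count the single block sitting at level $b_\chi$, noting the degenerate case $q=1$ as the paper does. Your explicit check that $n+q$ (when $r\ge1$), $n$ and $0$ lie strictly below $b_\chi$ is a small improvement, since the paper leaves implicit why the count is an equality and not merely a lower bound.
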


\begin{proof}
	In the balanced complete $\chi$-partite graph $T_\chi(n)$, the part sizes are as equal as possible. First suppose $r=0$. Then every part has size $q$, and
	\[
	b_\chi=n+\ceil{\frac{n}{\chi}}=n+q.
	\]
	By Lemma~\ref{lem:multipartite-spectrum}, each part of size $q$ contributes $q-1$ eigenvalues equal to $n+q$. Therefore, we have
	\[
	\mu_{T_\chi(n)}([b_\chi,\partial^{L}_1(T_\chi(n))])
	=
	\chi(q-1)
	=
	q\chi-\chi
	=
	n-\chi.
	\]
	When $q=1$, this formula gives $0$, which is consistent with the fact that $T_n(n)=K_n$ has no distance Laplacian eigenvalue equal to $n+1$. Now suppose $1\le r<\chi$. Then $T_\chi(n)$ has $r$ parts of size $q+1$ and $\chi-r$ parts of size $q$. In this case $\ceil{\frac{n}{\chi}}=q+1,$ so $b_\chi=n+q+1.$ By Lemma~\ref{lem:multipartite-spectrum}, only the parts of size $q+1$ contribute eigenvalues at the level $n+q+1$. Each such part contributes
	 $(q+1)-1=q$ eigenvalues. Since there are $r$ such parts, the total number of eigenvalues in
	$[b_\chi,\partial^{L}_1(T_\chi(n))]$ is $rq.$  
\end{proof}

\medskip
\noindent The next statement is a Ky Fan type form of the same domination principle. It is sometimes more informative than a single eigenvalue count, since it gives lower bounds for partial sums at the top of the distance Laplacian spectrum.

\begin{theorem}\label{thm:ky-fan}
	Let $G$, $\chi$, and $\ell_1\ge\ell_2\ge\cdots\ge\ell_\chi$ 
	be as in Theorem~\ref{thm:color-majorization}. Form the ordered list
	 $\Theta=(\theta_1,\dots,\theta_{n-\chi})$ by writing $n+\ell_j$ exactly $\ell_j-1$ times for every $j$ with $\ell_j\ge2$, and then arranging the resulting numbers in nonincreasing order. Then, for every
	$1\le r\le n-\chi$, 
	\[
	\sum_{i=1}^{r}\partial_i^{L}(G)\ge \sum_{i=1}^{r}\theta_i.
	\]
	In particular,
	\[
	\sum_{i=1}^{n-\chi}\partial_i^{L}(G)
	\ge
	\sum_{j=1}^{\chi}(\ell_j-1)(n+\ell_j),
	\]
	where a term with $\ell_j=1$ contributes zero.
\end{theorem}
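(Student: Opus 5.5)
The plan is to obtain the partial-sum inequality directly from the entrywise domination \eqref{eq:domination-G-H} in the proof of Theorem~\ref{thm:color-majorization}. That proof shows $\partial_i^L(G)\ge\partial_i^L(H)$ for every $1\le i\le n$, where $H=K_{\ell_1,\dots,\ell_\chi}$ is the multipartite completion of the fixed optimal coloring. We never need majorization in the strict Ky Fan sense: termwise domination already gives domination of every partial sum, once we identify the top $n-\chi$ entries of the ordered spectrum of $\DL(H)$ with $\Theta$.

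First I will identify $\Theta$ with the top of the spectrum of $H$. By Lemma~\ref{lem:multipartite-spectrum}, the spectrum of $\DL(H)$ consists of the blocks $(n+\ell_j)^{(\ell_j-1)}$ for the $p$ classes with $\ell_j\ge2$, followed by $n^{(\chi-1)}$ and $0$. The total length of the first group is $\sum_{j}(\ell_j-1)=n-\chi$. Every value in it is at least $n+2>n$, so these $n-\chi$ values are exactly the $n-\chi$ largest eigenvalues of $\DL(H)$. Listed in nonincreasing order they coincide with $\Theta$, that is, $\partial_i^L(H)=\theta_i$ for $1\le i\le n-\chi$. This is the same block-position bookkeeping used in the proof of Theorem~\ref{thm:color-majorization} via the indices $s_j$; the only new point is that $s_p=n-\chi$.

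Second, for $1\le r\le n-\chi$, I will sum the inequalities $\partial_i^L(G)\ge\partial_i^L(H)=\theta_i$ over $i=1,\dots,r$ to get $\sum_{i\le r}\partial_i^L(G)\ge\sum_{i\le r}\theta_i$. Taking $r=n-\chi$, the right side is $\sum_{j:\ell_j\ge2}(\ell_j-1)(n+\ell_j)$. Since terms with $\ell_j=1$ contribute $0$, this equals $\sum_{j=1}^{\chi}(\ell_j-1)(n+\ell_j)$, which is the displayed special case.

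I do not expect a genuine obstacle. The one point needing care is the degenerate range. If $n-\chi=0$, then $G=K_n$, $\Theta$ is empty, and the statement is vacuous. Also, the domination \eqref{eq:domination-G-H} was derived from Theorem~\ref{thm:monotone}, which is stated under the hypothesis $m\ge n$. I will simply cite \eqref{eq:domination-G-H} as established in the proof of Theorem~\ref{thm:color-majorization}, as the paper does, rather than re-derive it. If a more self-contained route were wanted, I would instead compare quadratic forms: by Lemma~\ref{lem:quadratic-form}, $d_G\ge d_H$ entrywise, so $\DL(G)-\DL(H)$ is positive semidefinite. Courant--Fischer then gives the ordered eigenvalue inequalities, and Ky Fan's maximum principle gives the partial sums directly.
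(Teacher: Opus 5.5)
Your proposal is correct and takes essentially the paper's route: the paper also derives the termwise bounds $\partial_i^L(G)\ge\theta_i$ for $1\le i\le n-\chi$ from the block bounds of Theorem~\ref{thm:color-majorization}, and then sums them. Those block bounds are exactly your $\partial_i^L(G)\ge\partial_i^L(H)=\theta_i$, and your treatment of the degenerate case $\chi=n$ and the alternative positive-semidefinite/Courant--Fischer justification of the domination are sound additions.
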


\begin{proof}
	Since $\sum_{j=1}^{\chi}(\ell_j-1)=n-\chi,$ so the list $\Theta$ has exactly $n-\chi$ entries. By Theorem~\ref{thm:color-majorization}, the eigenvalues of $\DL(G)$ dominate, component by component, the corresponding block values coming from the complete multipartite comparison graph
	$K_{\ell_1,\dots,\ell_\chi}$. More explicitly, for each $j$ with $\ell_j\ge2$, the block of indices associated with $C_j$ satisfies
	$\partial_i^{L}(G)\ge n+\ell_j.$ After arranging these block values in nonincreasing order, we have 
	 $\partial_i^{L}(G)\ge\theta_i$ for every $1\le i\le n-\chi.$ Now, summing the first $r$ inequalities, we have
	\[
	\sum_{i=1}^{r}\partial_i^{L}(G)
	\ge
	\sum_{i=1}^{r}\theta_i,
	\]
	for every $1\le r\le n-\chi$. With $r=n-\chi$, we derive
	\[
	\sum_{i=1}^{n-\chi}\partial_i^{L}(G)
	\ge
	\sum_{i=1}^{n-\chi}\theta_i.
	\]
	Finally, by the definition of $\Theta$, we have
	\[
	\sum_{i=1}^{n-\chi}\theta_i
	=
	\sum_{j=1}^{\chi}(\ell_j-1)(n+\ell_j),
	\]
	with the convention that the contribution is zero when $\ell_j=1$.
\end{proof}

\medskip
\noindent The complement-component obstruction from Corollary~\ref{cor:upper-by-complement} also applies at every threshold strictly larger than $n$. Combining it with Theorem~\ref{thm:multi-threshold} gives the following two-sided estimate.

\begin{corollary}\label{cor:sandwich}
	Let $G$ be connected, and let $t\ge1$ be an integer. Then
	\[
	\mu_G([n+t,\partial^{L}_1(G)])\le n-c(\comp{G}).
	\]
	Consequently, for every integer $t\ge2$,
	\[
	A_t
	\le
	\mu_G([n+t,\partial^{L}_1(G)])
	\le
	n-c(\comp{G}),
	\]
	where $A_t$ is defined in Theorem~\ref{thm:multi-threshold}.
\end{corollary}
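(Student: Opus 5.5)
The first inequality is a counting argument, modelled on the proof of Corollary~\ref{cor:upper-by-complement}. I will show that at least $c(\comp{G})$ eigenvalues of $\DL(G)$ lie strictly below $n+t$. By Theorem~\ref{thm:n-mult-complement}, $n$ is an eigenvalue of $\DL(G)$ with multiplicity exactly $c(\comp{G})-1$. Since $t\ge1$, we have $n<n+t$, so all of these copies lie in $[0,n+t)$. In addition, $\partial_n^L(G)=0<n+t$. This zero eigenvalue is distinct from the copies of $n$, because $n>0$. Counting with multiplicity, the interval $[0,n+t)$ therefore contains at least $(c(\comp{G})-1)+1=c(\comp{G})$ eigenvalues.

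Every eigenvalue lies either in $[0,n+t)$ or in $[n+t,\partial_1^L(G)]$. This is the analogue of \eqref{eq:count-identity} with $b_\chi$ replaced by $n+t$, and it is valid because all eigenvalues are nonnegative and at most $\partial_1^L(G)$. Hence
\[
\mu_G([n+t,\partial_1^L(G)])=n-\mu_G([0,n+t))\le n-c(\comp{G}).
\]
One degenerate case needs a remark. If $G=K_n$, then $\comp{G}$ has $n$ components and the bound reads $0$. This is consistent, since the spectrum of $K_n$ is $(n^{(n-1)},0)$ and no eigenvalue reaches $n+t$. So no separate hypothesis such as $\chi\le n-1$ is needed; the argument uses only $t\ge1$.

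For the second display, take $t\ge2$. Theorem~\ref{thm:multi-threshold} gives $A_t\le\mu_G([n+t,\partial_1^L(G)])$ directly, and this theorem is stated for all connected $G$ and integers $t\ge2$. Combining it with the first inequality, which holds for all $t\ge1$ and in particular for $t\ge2$, yields the sandwich $A_t\le\mu_G([n+t,\partial_1^L(G)])\le n-c(\comp{G})$.

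The only point needing care is the multiplicity bookkeeping in the first step: the eigenvalue $0$ must be counted separately from the eigenvalue $n$. This is immediate since $n>0$. I do not anticipate any genuine obstacle.
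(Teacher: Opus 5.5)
Your proposal is correct and follows the paper's proof step for step. For the upper bound you count the $c(\comp{G})-1$ copies of the eigenvalue $n$ from Theorem~\ref{thm:n-mult-complement} and the eigenvalue $0$, all strictly below $n+t$; for $t\ge2$ you invoke Theorem~\ref{thm:multi-threshold} to get the lower bound $A_t$. Your extra remarks, that $0\neq n$ so nothing is counted twice and the $K_n$ sanity check, are details the paper leaves implicit.
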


\begin{proof}
	Since $t\ge1$, we have  $n<n+t.$ Therefore every distance Laplacian eigenvalue equal to $n$ lies below the interval
	$[n+t,\partial^{L}_1(G)]$. By Theorem~\ref{thm:n-mult-complement}, the eigenvalue $n$ has multiplicity
	 $c(\comp{G})-1.$ In addition, $\partial_n^{L}(G)=0$, and clearly
	 $0<n+t.$ 	Thus at least $(c(\comp{G})-1)+1=c(\comp{G})$ eigenvalues of $\DL(G)$ lie below $n+t$. Hence at most
	 $n-c(\comp{G})$ eigenvalues can lie in the interval $[n+t,\partial^{L}_1(G)]$. This proves that 
	\[
	\mu_G([n+t,\partial^{L}_1(G)])\le n-c(\comp{G}).
	\]
	 For $t\ge2$, Theorem~\ref{thm:multi-threshold} gives the lower bound
	\[
	\mu_G([n+t,\partial^{L}_1(G)])\ge A_t.
	\]
\end{proof}

\begin{remark}
	Theorem~\ref{thm:multi-threshold} and Corollary~\ref{cor:sandwich} show that the upper part of the distance Laplacian spectrum is controlled by two competing structural effects. Large color classes push eigenvalues upward, while disconnectedness of the complement forces copies of the eigenvalue $n$, thereby limiting the number of eigenvalues that can lie above any threshold larger than $n$.
\end{remark}

\section{Concluding remarks and further directions}\label{conclusion}

Theorems \ref{thm:color-majorization} and \ref{thm:min-spectral-radius} show that optimal colorings encode substantial spectral
information for $\DL(G)$. A largest color class forces a block of large distance Laplacian eigenvalues, and the extremal
spectral-radius problem at fixed chromatic number is attained by balanced complete multipartite graphs. On the distribution
side, Theorem \ref{thm:diameter-refine} combines chromatic and diameter effects, while Theorems \ref{thm:multi-threshold} and
\ref{thm:ky-fan} show that the entire color-class profile gives a more detailed picture than the single value $\ell_1$.

\noindent  Several natural questions remain. One may try to sharpen diameter-sensitive lower bounds on the number of eigenvalues exceeding
$b_\chi$ for large diameter, characterize equality in Corollary \ref{cor:many-above-bchi} beyond complete multipartite families,
and extend the color-class majorization method to normalized distance Laplacian variants. Another promising direction is to
study stability, that is, if $\partial_1^L(G)$ is close to $n+\ceil{n/\chi}$, must $G$ be close, in an edit-distance sense, to a balanced
complete multipartite graph?

\section*{Declarations}
\noindent\textbf{Data Availability:} There is no data associated with this article.

\noindent\textbf{Funding:} The author did not receive support from any organization for the submitted work.

\noindent\textbf{Conflict of interest:} The author has no competing interests to declare that are relevant to the content of this article.

\end{document}